\begin{document}

\title{On second-order sufficient optimality conditions for $C^1$ vector optimization problems}
\titlerunning{On second-order sufficient optimality conditions for $C^1$ VPs}
\author{Nguyen Van Tuyen$^1$  \and
        Jen-Chih Yao$^2$ \and Ching-Feng Wen$^3$ \and Yi-Bin Xiao$^4$ 
}
\authorrunning{N.V. Tuyen, J.-C. Yao, C.-F. Wen, and Y.-B. Xiao}
	\institute{
		\Letter\ \ Ching-Feng Wen
		 \\
		\text{\ \ \ \ } {cfwen@kmu.edu.tw}
		\\
		\\
		\text{\ \ \ \ } Nguyen Van Tuyen
		\\
		\text{\ \ \ \ } {tuyensp2@yahoo.com; nguyenvantuyen83@hpu2.edu.vn}
		\\
		\\
		\text{\ \ \ \ } Jen-Chih Yao
		\\
		\text{\ \ \ \ } {yaojc@mail.cmu.edu.tw}
		\\
		\\
		\text{\ \ \ \ } Yi-Bin Xiao
		\\
		\text{\ \ \ \ } {xiaoyb9999@hotmail.com}
		\\
		\\
		\at$^1$\text{\ \ } School of Mathematical Sciences, University of Electronic Science and Technology of China, Chengdu, P.R. China; Department of Mathematics, Hanoi Pedagogical University 2, Xuan Hoa, Phuc Yen, Vinh Phuc, Vietnam
		\and
		\at$^2$\text{\ \ } Research Center for Interneural Computing,
		China Medical University Hospital China Medical University, Taichung, 40402, Taiwan 
		\and
		\at$^3$\text{\ \ } Center for Fundamental Science; and Research Center for Nonlinear Analysis and Optimization, Kaohsiung Medical University; Department of Medical Research, Kaohsiung Medical University Hospital, Kaohsiung, 80708, Taiwan
		\and
		\at$^4$\text{\ \ } School of Mathematical Sciences, University of Electronic Science and Technology of China, Chengdu, P.R. China	}

\date{Received: date / Accepted: date}

\maketitle

\begin{abstract}
In this paper, we present some second-order sufficient conditions in terms of the Demyanov--Pevnyi's second-order directional derivatives for  efficiency of $C^1$ vector optimization problems with constraints. Our results improve  and generalize conditions obtained by various authors in recent papers.

\keywords{Second-order sufficient optimality conditions \and Efficient solutions \and Generalized convexity \and $C^1$ functions}
 \subclass{49K30 \and90C29 \and90C46}
\end{abstract}

\section{Introduction and Preliminaries}\label{Introduction}
The study of optimality conditions is one of the most important issue in optimization theory. It is well-known that  the first-order optimality conditions are usually not sufficient for optimality except for convex optimization problems. The second-order optimality conditions not only complement first-order ones in eliminating non-optimal solutions, but they also  give us criteria in recognizing the optimality at a given feasible solution. For $C^2$ (i.e. twice continuously differentiable) constrained optimization problems,  the positive definiteness of the Hessian of the associated Lagrangian function on the null-space of the gradient mappings at a stationary point of the active constraints is a sufficient condition for the optimality at this point; see  \cite{Ben-Tal80,Bonnans2000}. For non-$C^2$-smooth problems, to obtain the second-order optimality conditions, many different kinds of generalized second-order directional derivatives have been proposed; see, for example, \cite{Cominetti,Demyanov1974,Ginchev05,Ginchev10,Ginchev11,Huy16,Huy17,Khanh07,Khanh08,Luu17,Mifflin04,Tuyen18,Xiao18}.   One of them is the well-known Demyanov--Pevnyi  second-order directional derivative; see \cite{Demyanov1974}. The second-order directional derivative defined by Demyanov and Pevnyi  was recognized as an effective tool in  studying second-order optimality conditions of nonsmooth optimization problems; see, for example,  \cite{Bel-Tal82,Bel-Tal85,Ginchev08,Jimenez03,Santos2013}.

Assume that $\phi\colon \mathbb{R}^n \to \mathbb{R}$ is a differentiable function at $x\in X$, where $X$ is a nonempty and open subset of $\mathbb{R}^n$. The {\em second-order directional derivative} (in the sense of Demyanov--Pevnyi) of $\phi$ at $x$ in the direction $d\in\mathbb{R}^n$ is defined by
\begin{equation*}
\phi^{\prime\prime}(x; d):=\lim\limits_{t\downarrow 0} \frac{2}{t^2} \Big[\phi (x+td)-\phi (x)-t\langle\nabla\phi(x), d\rangle\Big].
\end{equation*}
If $\phi^{\prime\prime}(x; d)$ exists and it is finite, then $\phi$ is called second-order directionally differentiable at $x$ in the direction $d$. In \cite{Bel-Tal82,Bel-Tal85}, Ben-Tal and Zowe showed that  the second-order directional derivative (in the sense of Demyanov--Pevnyi) exists for a general class of nonsmooth functions arising in applications, for example, the discrete $l_1$ function, the discrete max function, the exact penalty function, and the exterior penalty function. Furthermore, the authors also gave  explicit formulae to calculate the second-order directional derivatives of these functions; see \cite[Section 3]{Bel-Tal85}.

In \cite{Bel-Tal85}, Ben-Tal and Zowe established some second-order sufficient conditions in terms of the Demyanov--Pevnyi's second-order directional derivative for strict local minimizers of  unconstrained scalar optimization problems with $C^{1,1}$ (i.e. continuously differentiable with locally Lipschitz gradients) data. Thereafter, Ginchev and Ivanov \cite[Theorem 9]{Ginchev08} extended these results to scalar constrained optimization problems. Moreover, by using suitable generalized convex assumptions, the authors obtained some second-order sufficient conditions for a point to be a global minimizer. Recently, by using the second-order directional derivative in the sense of Hadamard,  Jim\'enez and Novo \cite{Jimenez03} obtained some sufficient conditions for strict local efficient solution of order $2$ of vector optimization problems with constraints. As shown in \cite[Section 2]{Jimenez03}, the second-order Hadamard directional differentiability implies the second-order directional differentiability in the sense of Demyanov--Pevnyi, but not vice versa.

Motivated by the works reported in \cite{Bel-Tal85,Jimenez03,Ginchev08}, in this paper, we establish some second-order sufficient optimality conditions in terms of the Demyanov--Pevnyi's second-order directional derivatives for efficiency of the following constrained vector optimization  problem
\begin{align*}
& \text{min}_{\,\mathbb{R}^p_+}\, f(x):=(f_1(x), \ldots, f_p(x))\label{problem} \tag{VP}
\\
&\text{s.t.}\ \ x\in \mathcal{F}:=\{x\in X\,:\, g_1(x)\leqq 0, \ldots, g_m(x)\leqq 0\},
\end{align*}
where  $\mathbb{R}^p_+:=\{(y_1, \ldots, y_p)\in\mathbb{R}^p\;:\; y_j\geqq 0, j=1, \ldots, p\}$ is  the nonnegative orthant of $\mathbb{R}^p$, $X$ is a nonempty open subset of $\mathbb{R}^n$,  $f_j$, $j\in J:=\{1, \ldots, p\}$, and $g_i$, $i\in I:=\{1, \ldots, m\}$,  are $C^{1}$ (i.e., continuously differentiable) real-valued functions defined on $X$.  The obtained results improve  the corresponding results of Ginchev and Ivanov \cite[Theorems 1--4]{Ginchev08}, of Jim\'enez and Novo  \cite[Theorem 5.9]{Jimenez03},  and modify  an incorrect result  in \cite[Theorem 5]{Santos2013}.

The organization of the paper is as follows. In the rest of this section, we recall some basic definitions and facts that we need later on. Section \ref{Sufficient-local} is devoted to investigate second-order sufficient conditions of Fritz-John type for a strict local efficient solution  of order $2$ of \eqref{problem}. In Section \ref{Sufficient-global}, we establish some second-order sufficient conditions of Karush--Kuhn--Tucker type and Fritz-John one for global efficiency of \eqref{problem} under suitable generalized convex assumptions.

In the sequel, we use the following notation and terminology. Fix $n \in {\mathbb{N}}:=\{1, 2, \ldots\}$  and abbreviate $(x_1, x_2, \ldots, x_n)$ by $x.$  The space $\mathbb{R}^n$ is equipped with the usual scalar product $\langle \cdot, \cdot \rangle$ and the corresponding Euclidean norm $\| \cdot\|$. The unit sphere in $\mathbb{R}^n$ is denoted by $\mathbb{S}^n$. We denote by $B(x, \delta)$ the open ball centered at $x$ and radius $\delta$.

Let $\Omega$ be a nonempty and closed subset in $\mathbb{R}^n$ and $\bar x\in
\Omega$. The {\em tangent cone} to $\Omega$ at $\bar x$ is defined by
\begin{equation*}
T(\Omega; \bar x):=\{h\in \mathbb{R}^n\;:\; \exists t_k\to 0^+, \exists h^k\to h, \bar x+t_kh^k\in \Omega, \ \ \forall k\in\mathbb{N}\}.
\end{equation*}
It is well-known that for each $x\in \mathbb{S}^n$, we have
$$T (\mathbb{S}^n; x)=x^\bot,$$
where $x^\bot:=\{u\in \mathbb{R}^n\;:\; \langle x, u\rangle=0\}$.

For $a, b\in\mathbb{R}^p$, by $a\leqq b$, we  mean  $a_j\leqq b_j$ for all $j\in J$; by $a\leq b$, we mean $a\leqq b$ and $a\neq b$; and by $a<b$, we mean $a_j<b_j$ for all $j\in J$.
\begin{definition}[see \cite{Ehrgott2006,Jimenez02}]{\rm Let $\bar x\in \mathcal{F}$. We say that:
\begin{enumerate}[(i)]
	\item  $\bar x$ is a  {\em  global weak efficient solution} (resp., {\em   global efficient solution}, {\em strict global   efficient solution}) of problem \eqref{problem}  if there is no $x\in \mathcal{F}$ satisfying $f(x)<f(\bar x)$ (resp., $f(x)\leq f(\bar x)$, $f(x)\leqq f(\bar x)$ with $x\neq \bar x$).
	
	\item $\bar x$ is a  {\em  strict global  efficient solution of order $2$}   of problem \eqref{problem}  if  there exists a constant $\alpha>0$ such that
	\begin{equation*}
	\left(f(x)+\mathbb{R}^p_+\right)\cap B\left(f(\bar x), \alpha\|x-\bar x\|^2\right)=\emptyset, \ \ \forall x\in \mathcal{F}\setminus\{0\}.
	\end{equation*}
	
	\item    $\bar x$ is a {\em local  weak  efficient solution} (resp., {\em local efficient solution}, {\em strict local   efficient solution}, {\em  strict local  efficient solution of order $2$}) of problem \eqref{problem} if it is a global weak  efficient solution (resp., global efficient solution, strict global     efficient solution,  strict global  efficient solution of order $2$) of the considered problem with the constraint set $U\cap \mathcal{F}$, where $U$ is some neighborhood of $\bar x$.
\end{enumerate}
		}
\end{definition}
Fix $\bar x\in \mathcal{F}$, the {\em active index set} at $\bar x$ is defined by $I(\bar x):=\{i\in I\,:\,g_i(\bar x)=0\}.$
For each $d\in\mathbb{R}^n$, put
\begin{align*}
J(\bar x; d)&:=\{j\in J\,:\,\langle\nabla f_j(\bar x), d\rangle=0\},
\\
I(\bar x; d)&:=\{i\in I(\bar x)\,:\, \langle \nabla g_i(\bar x), d\rangle=0\}
\\
\mathcal{C}(f; \bar x)&:=\{d\in\mathbb{R}^n\,:\, \langle\nabla f_j(\bar x), d\rangle\leqq 0,\ \ j\in J\}.
\end{align*}
We say that $d$ is a {\em critical direction} of problem \eqref{problem}  at $\bar x$   if
\[
\begin{cases}
\langle \nabla f_j(\bar x), d\rangle&\leqq 0, \ \ \ \forall j\in J,
\\
\langle \nabla g_i(\bar x), d\rangle&\leqq 0, \ \ \ \forall i\in I(\bar x).
\end{cases}
\]
The set of all critical direction of problem \eqref{problem} at $\bar x$ is denoted by $\mathcal{C}(\bar x)$. For each $d\in \mathcal{C}(\bar x)$, put
\begin{equation*}
\mathcal{C}(\bar x; d):=\{w\in\mathbb{R}^n\,:\,\langle\nabla g_i(\bar x), w\rangle\leqq 0,\ \  i\in I(\bar x; d)\}.
\end{equation*}

The following lemmas will be needed in the sequel.
\begin{lemma}[{see \cite[Lemma 3]{Santos2013}}]\label{Taylor-expansion} Let $\phi\colon X\subset \mathbb{R}^n\to \mathbb{R}$ be a differentiable function, where $X$ is a nonempty and open set and suppose that $\phi$ is second-order directionally differentiable at $\bar x\in X$ in the direction $d\in\mathbb{R}^n$.  Then, for $t>0$ small enough, it holds
\begin{equation*}
\phi(\bar x+td)-\phi (\bar x) = t\langle\nabla \phi (\bar x), d\rangle+\frac{1}{2}t^2\phi^{\prime\prime}(\bar x; d) + o(t^2).
\end{equation*}
\end{lemma}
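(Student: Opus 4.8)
The plan is to establish the second-order Taylor expansion directly from the definition of the Demyanov--Pevnyi second-order directional derivative. The key observation is that the statement is essentially a rearrangement of that definition together with the standard little-$o$ formalism. Since $\phi$ is assumed second-order directionally differentiable at $\bar x$ in the direction $d$, the limit
\begin{equation*}
\phi^{\prime\prime}(\bar x; d)=\lim_{t\downarrow 0}\frac{2}{t^2}\Big[\phi(\bar x+td)-\phi(\bar x)-t\langle\nabla\phi(\bar x), d\rangle\Big]
\end{equation*}
exists and is finite. First I would introduce the remainder function
\begin{equation*}
r(t):=\phi(\bar x+td)-\phi(\bar x)-t\langle\nabla\phi(\bar x), d\rangle-\tfrac{1}{2}t^2\phi^{\prime\prime}(\bar x; d),
\end{equation*}
which is well-defined for all $t>0$ small enough so that $\bar x+td\in X$ (possible since $X$ is open). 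The goal then reduces to showing that $r(t)=o(t^2)$ as $t\downarrow 0$.

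Next I would rewrite $\frac{2}{t^2}r(t)$ in terms of the defining difference quotient. A direct computation gives
\begin{equation*}
\frac{2}{t^2}r(t)=\frac{2}{t^2}\Big[\phi(\bar x+td)-\phi(\bar x)-t\langle\nabla\phi(\bar x), d\rangle\Big]-\phi^{\prime\prime}(\bar x; d).
\end{equation*}
By the very definition of $\phi^{\prime\prime}(\bar x; d)$, the bracketed quotient on the right-hand side converges to $\phi^{\prime\prime}(\bar x; d)$ as $t\downarrow 0$, so the entire expression $\frac{2}{t^2}r(t)$ tends to $0$. This is exactly the statement that $r(t)/t^2\to 0$, i.e. $r(t)=o(t^2)$, which upon substitution back into the definition of $r$ yields the claimed expansion.

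There is no substantial obstacle here; the argument is a one-line consequence of the definition once the remainder is isolated. The only point requiring any care is the bookkeeping of the ``$t>0$ small enough'' qualifier, ensuring that $\bar x+td$ lies in the domain $X$ so that $\phi(\bar x+td)$ is defined and the difference quotient makes sense; this is guaranteed by openness of $X$. I would also remark that finiteness of $\phi^{\prime\prime}(\bar x; d)$ is essential, since it is what allows the term $\tfrac{1}{2}t^2\phi^{\prime\prime}(\bar x; d)$ to be a genuine finite $O(t^2)$ contribution and the residual to be correctly classified as $o(t^2)$; this is precisely the hypothesis of second-order directional differentiability. Thus the proof amounts to unwinding the definition and identifying the limit, with no genuinely hard step.
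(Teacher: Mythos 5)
Your proof is correct: isolating the remainder $r(t)$ and observing that $\tfrac{2}{t^2}r(t)\to 0$ is exactly the definition of $\phi^{\prime\prime}(\bar x;d)$, which gives $r(t)=o(t^2)$. The paper itself offers no proof of this lemma, simply citing \cite[Lemma 3]{Santos2013}, and the argument there is the same one-line unwinding of the Demyanov--Pevnyi definition that you carry out, so your approach coincides with the intended one.
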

\begin{lemma}[{see \cite[Proposition 3.4]{Jimenez02}}]\label{stric-efficient-lemma} Let $\bar x$ be a feasible point of problem \eqref{problem}. Then $\bar x$ is not a strict local  efficient solution of order $2$ of problem \eqref{problem} if and only if there exist sequences $\{x^k\}\subset \mathcal{F}\setminus\{\bar x\}$, $\{a^k\}\subset \mathbb{R}^p_+$, such that $x^k\to \bar x$ and
\begin{equation*}
\lim\limits_{k\to\infty} \dfrac{f(x^k)-f(\bar x)+a^k}{\|x^k-\bar x\|^2}=0.
\end{equation*}
\end{lemma}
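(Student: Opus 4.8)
The plan is to translate the geometric definition of a strict local efficient solution of order $2$ into an equivalent analytic inequality and then prove each implication by contraposition. First I would record what the emptiness condition means pointwise: for fixed $x\in\mathcal{F}$ and $\alpha>0$,
\[
\left(f(x)+\mathbb{R}^p_+\right)\cap B\left(f(\bar x), \alpha\|x-\bar x\|^2\right)=\emptyset
\]
holds precisely when no point $f(x)+a$ with $a\in\mathbb{R}^p_+$ lies in the open ball of radius $\alpha\|x-\bar x\|^2$ about $f(\bar x)$, that is, $\|f(x)+a-f(\bar x)\|\geq \alpha\|x-\bar x\|^2$ for every $a\in\mathbb{R}^p_+$. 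Unwinding the definition then says: $\bar x$ is a strict local efficient solution of order $2$ iff there exist a neighborhood $U$ of $\bar x$ and $\alpha>0$ such that this inequality holds for all $x\in(U\cap\mathcal{F})\setminus\{\bar x\}$ and all $a\in\mathbb{R}^p_+$. I would also note that the vector limit in the statement equals $0$ iff the scalar quotient $\|f(x^k)-f(\bar x)+a^k\|/\|x^k-\bar x\|^2$ tends to $0$, so I may work with norms throughout.

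For the forward direction I would negate the analytic form of the definition. If $\bar x$ fails to be a strict local efficient solution of order $2$, then for \emph{every} neighborhood $U$ and \emph{every} $\alpha>0$ there are $x\in(U\cap\mathcal{F})\setminus\{\bar x\}$ and $a\in\mathbb{R}^p_+$ with $\|f(x)+a-f(\bar x)\|<\alpha\|x-\bar x\|^2$. Choosing the shrinking data $U=B(\bar x,1/k)$ and $\alpha=1/k$ for each $k\in\mathbb{N}$ yields points $x^k\in(B(\bar x,1/k)\cap\mathcal{F})\setminus\{\bar x\}$ and $a^k\in\mathbb{R}^p_+$ such that
\[
0\leq \frac{\|f(x^k)-f(\bar x)+a^k\|}{\|x^k-\bar x\|^2}<\frac{1}{k}.
\]
Since $x^k\in B(\bar x,1/k)$ forces $x^k\to\bar x$, a squeeze gives that the quotient, hence the vector quotient, tends to $0$, which is exactly the pair of sequences required.

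For the converse I would argue by contradiction. Assume the sequences $\{x^k\}\subset\mathcal{F}\setminus\{\bar x\}$ and $\{a^k\}\subset\mathbb{R}^p_+$ with $x^k\to\bar x$ and vanishing quotient exist, yet $\bar x$ is a strict local efficient solution of order $2$, witnessed by $U$ and $\alpha>0$. Since $x^k\to\bar x$, for all large $k$ we have $x^k\in(U\cap\mathcal{F})\setminus\{\bar x\}$, and applying the analytic form of the definition with $a=a^k$ gives $\|f(x^k)-f(\bar x)+a^k\|\geq\alpha\|x^k-\bar x\|^2$. Hence the quotient stays bounded below by $\alpha>0$ along the tail, contradicting the assumption that it tends to $0$; therefore $\bar x$ is not a strict local efficient solution of order $2$.

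The step I expect to require the most care is not any estimate but the bookkeeping of the nested quantifiers when negating the definition, together with the observation that $B(f(\bar x),\alpha\|x-\bar x\|^2)$ is \emph{open}: membership in its complement is the non-strict inequality $\geq$, while the negation used in the forward direction is the strict inequality $<$. Once the geometric emptiness condition is faithfully recast as this inequality, both implications collapse to a diagonal extraction with $\alpha_k=1/k$ and a squeeze in one direction, and a lower-bound contradiction in the other.
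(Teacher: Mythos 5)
Your proof is correct, and it is worth noting that the paper itself offers no proof of this lemma --- it is imported by citation from \cite[Proposition 3.4]{Jimenez02} --- so your argument serves as a faithful reconstruction of the standard one: recasting the empty-intersection condition (via openness of the ball) as $\|f(x)+a-f(\bar x)\|\geq\alpha\|x-\bar x\|^2$ for all $a\in\mathbb{R}^p_+$, extracting sequences with $U=B(\bar x,1/k)$ and $\alpha=1/k$ for the forward direction, and deriving the lower bound $\alpha>0$ on the quotient for the contradiction in the converse. Both directions, including the open-ball versus non-strict-inequality bookkeeping and the reduction of the vector limit to the scalar norm quotient, are handled correctly.
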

\section{Sufficient conditions for a strict local  efficient solution  of order $2$}\label{Sufficient-local}
In this section, we  focus on deriving sufficient optimality conditions of Fritz-John type for a   local strict efficient solution  of order $2$ of \eqref{problem}. The main result is as follows.
\begin{theorem}\label{sufficient conditions-I} Let $\bar x$ be a feasible point of \eqref{problem}.  Suppose that $f_j$, $j\in J$, $g_i$, $i\in I(\bar x)$, are  second-order directionally differentiable at $\bar x$ in every direction $d\in T(\mathcal{F}; \bar x)\cap\mathcal{C}(f;\bar x)$.    If for each  $d\in [ T(\mathcal{F}; \bar x)\cap\mathcal{C}(f;\bar x)]\setminus\{0\}$, the following conditions {\rm (I)} and {\rm(II)} are fulfilled, then $\bar x$ is a strict local  efficient solution of order $2$ of problem \eqref{problem}.
\begin{itemize}
\item [\rm (I).] There is $(\mu, \lambda)\in (\mathbb{R}^p_+\times \mathbb{R}^m_+)\setminus\{(0,0)\}$  satisfying
\begin{align}
&\sum_{j=1}^p \mu_j \nabla f_j(\bar x)+\sum_{i=1}^m \lambda_i\nabla g_i(\bar x)=0,\label{Adjoint equation}
\\
&\sum_{j=1}^p \mu_j f_j^{\prime\prime}(\bar x; d)+\sum_{i=1}^m \lambda_i g_i^{\prime\prime}(\bar x; d)>0,\label{second-order sufficient}
\\
&\lambda_i g_i(\bar{x})=0; \ \ \ i\in I.\label{complementary-condition}
\end{align}
  \item [\rm (II).]
  \begin{equation}\label{add-condition}
  \max_{j\in J(\bar x; d)}  \langle \nabla f_j(\bar x), w\rangle>0,\ \ \ \forall w\in \mathcal{C}(\bar x; d)\cap d^{\bot} \setminus\{0\}.
  \end{equation}
  \end{itemize}
\end{theorem}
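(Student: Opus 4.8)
The plan is to argue by contradiction using Lemma \ref{stric-efficient-lemma}. Suppose $\bar x$ were not a strict local efficient solution of order $2$. Then there are sequences $\{x^k\}\subset\mathcal F\setminus\{\bar x\}$ and $\{a^k\}\subset\mathbb R^p_+$ with $x^k\to\bar x$ and $\big(f(x^k)-f(\bar x)+a^k\big)/\|x^k-\bar x\|^2\to 0$. Writing $t_k:=\|x^k-\bar x\|$ and $d^k:=(x^k-\bar x)/t_k\in\mathbb S^n$, I would pass to a subsequence with $d^k\to d$ for some unit vector $d$. From $x^k=\bar x+t_kd^k\in\mathcal F$ one gets $d\in T(\mathcal F;\bar x)$, and expanding each $f_j$ to first order together with $a^k_j\ge 0$ forces $\langle\nabla f_j(\bar x),d\rangle\le 0$, so $d\in[T(\mathcal F;\bar x)\cap\mathcal C(f;\bar x)]\setminus\{0\}$. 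Hence hypotheses (I) and (II) apply to this $d$, and all second-order derivatives used below exist by assumption (for constraints, only $g_i$ with $i\in I(\bar x)$ enter, so their $g_i''(\bar x;d)$ are available).

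The technical heart is a careful second-order expansion along the \emph{moving} directions $d^k$. I would split $x^k-\bar x=\tau_kd+p^k$ with $\tau_k:=\langle x^k-\bar x,d\rangle$ and $p^k$ orthogonal to $d$, noting $\tau_k=t_k(1+o(1))>0$ and $\|p^k\|=o(t_k)$. Expanding first to second order along the \emph{fixed} direction $d$ via Lemma \ref{Taylor-expansion} from $\bar x$ to $y_k:=\bar x+\tau_kd$, and then to first order from $y_k$ to $x^k=y_k+p^k$ using only continuity of the gradients, yields for each $f_j$ and each active $g_i$ ($i\in I(\bar x)$), writing $\phi$ for such a component,
\[
\phi(x^k)-\phi(\bar x)=\tau_k\langle\nabla\phi(\bar x),d\rangle+\tfrac12\tau_k^2\phi''(\bar x;d)+\langle\nabla\phi(\bar x),p^k\rangle+o(t_k^2)+o(\|p^k\|).
\]
The point of expanding about $y_k$ rather than about $\bar x$ is that the orthogonal remainder is $o(\|p^k\|)$ rather than the useless $o(t_k)$. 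Everything then hinges on the size of $\|p^k\|$ relative to the second-order scale $t_k^2$, and I would run a dichotomy according to whether $\{\|p^k\|/t_k^2\}$ is bounded or, along a subsequence, tends to $+\infty$.

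If $\|p^k\|/t_k^2$ stays bounded, I would invoke (I). Form the Lagrangian $L:=\sum_j\mu_jf_j+\sum_i\lambda_ig_i$ with the multipliers from (I). Summing the displayed expansion against $(\mu,\lambda)$, both the first-order term $\tau_k\langle\nabla L(\bar x),d\rangle$ and, crucially, the orthogonal term $\langle\nabla L(\bar x),p^k\rangle$ vanish by the adjoint equation \eqref{Adjoint equation}, leaving $L(x^k)-L(\bar x)=\tfrac12\tau_k^2\big(\sum_j\mu_jf_j''(\bar x;d)+\sum_i\lambda_ig_i''(\bar x;d)\big)+o(t_k^2)$, whose leading coefficient is positive by \eqref{second-order sufficient}. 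On the other hand, feasibility of $x^k$ with \eqref{complementary-condition} gives $\sum_i\lambda_i(g_i(x^k)-g_i(\bar x))\le 0$, while $a^k_j\ge 0$ and the limit of Lemma \ref{stric-efficient-lemma} give $\sum_j\mu_j(f_j(x^k)-f_j(\bar x))\le o(t_k^2)$; hence $L(x^k)-L(\bar x)\le o(t_k^2)$, a contradiction after dividing by $t_k^2$.

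If instead $\|p^k\|/t_k^2\to\infty$, I would invoke (II). Passing to a further subsequence, $\hat p^k:=p^k/\|p^k\|\to\bar w$ with $\bar w\in d^\bot$ and $\|\bar w\|=1$. Dividing the expansion of $g_i$ ($i\in I(\bar x;d)$) by $\|p^k\|$ and letting $k\to\infty$ — here $\tau_k^2/\|p^k\|\to 0$ exactly because $\|p^k\|\gg t_k^2$ — the feasibility inequality $g_i(x^k)\le 0=g_i(\bar x)$ yields $\langle\nabla g_i(\bar x),\bar w\rangle\le 0$, so $\bar w\in\mathcal C(\bar x;d)\cap d^\bot\setminus\{0\}$. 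Then \eqref{add-condition} provides $j_0\in J(\bar x;d)$ with $\langle\nabla f_{j_0}(\bar x),\bar w\rangle>0$; dividing the expansion of $f_{j_0}$ by $\|p^k\|$ shows $(f_{j_0}(x^k)-f_{j_0}(\bar x))/\|p^k\|\to\langle\nabla f_{j_0}(\bar x),\bar w\rangle>0$, whereas $a^k_{j_0}\ge 0$ and Lemma \ref{stric-efficient-lemma} force this quotient to be $\le o(t_k^2)/\|p^k\|\to 0$, the final contradiction. The main obstacle, and the step deserving the most care, is precisely this interplay of the two scales $t_k^2$ and $\|p^k\|$: a single crude first-order expansion about $\bar x$ fails to separate them, and it is the two-step expansion about $y_k$ together with the cancellation of the orthogonal first-order term in the Lagrangian that makes both branches of the dichotomy close.
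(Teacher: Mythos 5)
Your proof is correct, and it follows the same broad strategy as the paper's: contradiction via Lemma \ref{stric-efficient-lemma}, extraction of a limiting unit direction $d\in[T(\mathcal{F};\bar x)\cap\mathcal{C}(f;\bar x)]\setminus\{0\}$, a second-order expansion along the fixed direction $d$ combined with a Mean Value Theorem estimate for the deviation from that direction, and a dichotomy on the size of the deviation, with condition (I) closing the small-deviation branch and condition (II) the large-deviation branch. Where you genuinely differ is in how the deviation is parametrized. The paper stays on the unit sphere: it sets $d^k:=(x^k-\bar x)/t_k$, $r_k:=\|d^k-d\|$, $w^k:=(d^k-d)/r_k$, splits into three cases according to $r_k/t_k\to 0$, $r_k/t_k\to\rho>0$, $r_k/t_k\to+\infty$, and in the last case must argue that the limit $w$ of $w^k$ lies in $d^\perp$ via the identity $T(\mathbb{S}^n;d)=d^\perp$. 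You instead use the orthogonal decomposition $x^k-\bar x=\tau_k d+p^k$ with $p^k\perp d$; since $\|p^k\|\sim t_k r_k$, your two cases ($\|p^k\|/t_k^2$ bounded, or $\to+\infty$ along a subsequence) correspond exactly to the paper's Cases 1--2 merged and Case 3. This buys two simplifications: the limit $\bar w$ of $p^k/\|p^k\|$ lies in $d^\perp$ automatically, so the tangent-cone-to-the-sphere step disappears; and phrasing the bounded branch through the Lagrangian $L=\sum_j\mu_jf_j+\sum_i\lambda_ig_i$, whose gradient vanishes by \eqref{Adjoint equation}, kills both first-order terms $\tau_k\langle\nabla L(\bar x),d\rangle$ and $\langle\nabla L(\bar x),p^k\rangle$ at once, which is precisely why you do not need to distinguish $r_k/t_k\to 0$ from $r_k/t_k\to\rho>0$ as the paper does. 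Both arguments use exactly the same ingredients --- Lemma \ref{Taylor-expansion} for the fixed-direction second-order term, the Mean Value Theorem plus continuity of the gradients (this is where $C^1$ enters) for the transverse error, complementarity to discard inactive constraints --- so neither is more general; yours is somewhat shorter and structurally cleaner, while the paper's keeps all quantities normalized on the unit sphere.
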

\begin{proof} On the contrary, suppose that $\bar x$ is not a strict local  efficient solution of order $2$ of \eqref{problem}. Then, by Lemma \ref{stric-efficient-lemma}, there exist sequences  $\{x^k\}\subset \mathcal{F}\setminus\{\bar x\}$, $\{a^k\}\subset \mathbb{R}^p_+$, such that $x^k\to \bar x$ and
\begin{equation*}
\lim\limits_{k\to\infty} \dfrac{f(x^k)-f(\bar x)+a^k}{\|x^k-\bar x\|^2}=0.
\end{equation*}
Hence, for each $j\in J$ and $k\in\mathbb{N}$, we have
\begin{equation}\label{equ:1}
f_j(x^k)-f_j(\bar x)+a_j^k=o(t_k^2),
\end{equation}
where $t_k:=\|x^k-\bar x\|$.

For each $k\in\mathbb{N}$, put  $d^k:=\frac1{t_k}(x^k-\bar x)$. 	Then, $\|d^k\|=1$ for all $k\in\mathbb{N}$. So, without any loss of generality, we may assume that $\{d^k\}$ converges to some $d\in\mathbb{R}^n$ with $\|d\|=1$. Clearly, $d\in T(\mathcal{F}; \bar x)$.
	
	We claim that $d\in \mathcal{C}(f; \bar x)$. Indeed, for each $j\in J$ and $k\in\mathbb{N}$, we have
\begin{equation*}
 f_j(x^k)-f_j(\bar x)= [f_j(\bar x+t_kd^k)-f_j(\bar x+t_kd)]+[f_j(\bar x+t_kd)-f_j(\bar x)].
\end{equation*}
By the Mean Value Theorem for differentiable functions, there exists $\xi_{j}^k\in (\bar x+t_kd^k, \bar x+t_kd)$ satisfying
	\begin{equation*}
	f_j(\bar x+t_kd^k)-f_j(\bar x+t_kd)= t_k\langle \nabla f_j(\xi_{j}^k), d^k-d\rangle.
	\end{equation*}
By  Lemma \ref{Taylor-expansion}, we have
\begin{equation*}
f_j(\bar x+t_kd)-f_j(\bar x)=t_k\langle \nabla f_j(\bar x), d\rangle+\frac{1}{2}t_k^2 f^{\prime\prime}_j(\bar x; d) +o(t^2_k).
\end{equation*}
Hence, by \eqref{equ:1}, we have
\begin{align*}
t_k\langle \nabla f_j(\xi_{j}^k), d^k-d\rangle+t_k\langle \nabla f_j(\bar x), d\rangle+\frac{1}{2}t_k^2 f^{\prime\prime}_j(\bar x; d)+a_j^k=&f_j(x^k)-f_j(\bar x)
\\
&+o(t^2_k)+a_j^k= o(t^2_k).
\end{align*}
This implies that
\begin{equation}\label{equ:4}
0\geqq -\frac{1}{t_k}a^k_j=\langle \nabla f_j(\xi_{j}^k), d^k-d\rangle+\langle \nabla f_j(\bar x), d\rangle+\frac{1}{2}t_k f^{\prime\prime}_j(\bar x; d)+\frac{1}{t_k}p_j^k(t_k),
\end{equation}
where $p_j^k(t_k)=o(t_k^2)$. Since $\xi_{j}^k\to \bar x$, $d^k\to d$ as $k\to \infty$, and $f_j\in C^1(\mathbb{R}^n)$, letting $k\to \infty$ in \eqref{equ:4}, we obtain
\begin{equation*}
\langle \nabla f_j(\bar x), d\rangle\leqq 0, \ \ j\in J,
\end{equation*}
as required.

By Lemma \ref{Taylor-expansion} and the Mean Value Theorem for differentiable functions, for each $i\in I(\bar x)$ and $k\in\mathbb{N}$, there exists  $\eta_{i}^k\in (\bar x+t_kd^k, \bar x+t_kd)$ satisfying
\begin{equation*}
0\geqq g_i(x^k)=t_k\langle \nabla g_i(\eta_{i}^k), d^k-d\rangle+t_k\langle \nabla g_i(\bar x), d\rangle+\frac{1}{2}t_k^2 g^{\prime\prime}_i(\bar x; d) +q_i^k(t_k),
\end{equation*}
where $q_i^k(t_k)=o(t^2_k)$. Thus,
\begin{equation}\label{equ:5}
\langle \nabla g_i(\eta_{i}^k), d^k-d\rangle+\langle \nabla g_i(\bar x), d\rangle+\frac{1}{2}t_k g^{\prime\prime}_i(\bar x; d) +\frac{1}{t_k}q_i^k(t_k)\leqq 0.
\end{equation}

Let  $(\mu, \lambda)\in \mathbb{R}^p_+\times\mathbb{R}^m_+$  be a nonzero Lagrange multiplier  satisfying conditions   \eqref{Adjoint equation}--\eqref{complementary-condition}. Now, multiplying \eqref{equ:4} by $\mu_j$ and \eqref{equ:5} by $\lambda_i$ and summing the inequations obtained, we obtain
\begin{align}
&\sum_{j=1}^p\mu_j\left[\langle \nabla f_j(\xi_{j}^k), d^k-d\rangle+\langle \nabla f_j(\bar x), d\rangle+\frac{1}{2}t_k f^{\prime\prime}_j(\bar x; d)+\frac{1}{t_k}p^k_j(t_k)\right]\notag
\\
&+\sum_{i\in I(\bar x)} \lambda_i\left[\langle \nabla g_i(\eta_{i}^k), d^k-d\rangle+\langle \nabla g_i(\bar x), d\rangle+\frac{1}{2}t_k g^{\prime\prime}_i(\bar x; d)+\frac{1}{t_k}q_i^k(t_k)\right]\leqq 0.\label{equ:6}
\end{align}
Since \eqref{Adjoint equation} and \eqref{complementary-condition}, we see that   \eqref{equ:6} is equivalent to
\begin{align}
&\sum_{j=1}^p\mu_j\left[\langle \nabla f_j(\xi_{j}^k), d^k-d\rangle+ \frac{1}{2}t_k f^{\prime\prime}_j(\bar x; d)+\frac{1}{t_k}p^k_j(t_k)\right]\notag
\\
&+\sum_{i\in I(\bar x)} \lambda_i\left[\langle \nabla g_i(\eta_{i}^k), d^k-d\rangle +\frac{1}{2}t_k g^{\prime\prime}_i(\bar x; d)+\frac{1}{t_k}q_i^k(t_k)\right]\leqq 0.\label{equ:7}
\end{align}
For each $k\in \mathbb{N}$, put $r_k:=\|d^k-d\|$ and $w^k:=\frac{d^k-d}{r_k}$. By the boundedness of $\{w^k\}$, without any loss of generality, we may assume that $\{w^k\}$ converges to some $w\in\mathbb{R}^n$ with $\|w\|=1$. We now rewrite  \eqref{equ:7} as follows:
\begin{align}\label{equ:8}
\begin{split}
\sum_{j=1}^p\mu_j\bigg[r_k\langle \nabla f_j(\xi_{j}^k), w^k\rangle&+\frac{1}{2}t_k f^{\prime\prime}_j(\bar x; d)+\frac{1}{t_k}p^k_j(t_k)\bigg]
\\
&+\sum_{i=1}^m \lambda_i\left[r_k\langle \nabla g_i(\eta_{i}^k), w^k\rangle+\frac{1}{2}t_k g^{\prime\prime}_i(\bar x; d)+\frac{1}{t_k}q_i^k(t_k)\right]\leqq 0.
\end{split}
\end{align}
By passing to a subsequence if necessary, we may consider three cases of the sequence $\big\{\frac{r_k}{t_k}\big\}$ as follows.
\\
{\bf Case 1.} $\displaystyle\lim_{k\to\infty}\frac{r_k}{t_k}=0$. Dividing the two sides of \eqref{equ:8} by $\frac 12t_k$, gives
\begin{align}\label{equ:9}
\begin{split}
\sum_{j=1}^p\mu_j\bigg[\frac{2r_k}{t_k}\langle \nabla f_j(\xi_{j}^k), w^k\rangle&+ f^{\prime\prime}_j(\bar x; d)+\frac{2}{t^2_k}p^k_j(t_k)\bigg]
\\
&+\sum_{i=1}^m \lambda_i\left[\frac{2r_k}{t_k}\langle \nabla g_i(\eta_{i}^k), w^k\rangle+g^{\prime\prime}_i(\bar x; d)+\frac{2}{t^2_k}q_i^k(t_k)\right]\leqq 0.
\end{split}
\end{align}
Letting $k\to\infty$ in \eqref{equ:9}, we obtain
\begin{equation*}
\sum_{j=1}^p\mu_jf^{\prime\prime}_j(\bar x; d)+\sum_{i=1}^m \lambda_ig^{\prime\prime}_i(\bar x; d)\leqq  0,
\end{equation*}
contrary to \eqref{second-order sufficient}.
\\
{\bf Case 2.} $\displaystyle\lim_{k\to\infty}\frac{r_k}{t_k}=:\rho>0$. Letting $k\to\infty$ in \eqref{equ:9}, one has
\begin{equation*}
\sum_{j=1}^p\mu_j\left[2\rho\langle \nabla f_j(\bar x), w\rangle+f^{\prime\prime}_j(\bar x; d)\right]+\sum_{i=1}^m \lambda_i\left[2\rho\langle \nabla g_i(\bar x), w\rangle+ g^{\prime\prime}_i(\bar x; d)\right]\leqq 0,
\end{equation*}
or, equivalently,
\begin{equation*}
2\rho\left[\left\langle \sum_{j=1}^p \mu_j \nabla f_j(\bar x)+\sum_{i=1}^m \lambda_i\nabla g_i(\bar x), w\right\rangle\right] +\sum_{j=1}^p\mu_jf^{\prime\prime}_j(\bar x; d)+\sum_{i=1}^m \lambda_ig^{\prime\prime}_i(\bar x; d)\leqq  0.
\end{equation*}
By \eqref{Adjoint equation}, we have
\begin{equation*}
\sum_{j=1}^p\mu_jf^{\prime\prime}_j(\bar x; d)+\sum_{i=1}^m \lambda_ig^{\prime\prime}_i(\bar x; d)\leqq  0,
\end{equation*}
again contrary to \eqref{second-order sufficient}.
\\
{\bf Case 3.} $\displaystyle\lim_{k\to\infty}\frac{r_k}{t_k}=+\infty$. This means that  $\displaystyle\lim_{k\to\infty}\frac{t_k}{r_k}=0$. Substituting $d^k-d=r_kw^k$ into \eqref{equ:4} and \eqref{equ:5}, we obtain
\begin{align}
r_k\langle \nabla f_j(\xi_{j}^k), w_k\rangle+ \langle \nabla f_j(\bar x), d\rangle+\frac{1}{2}t_k  f^{\prime\prime}_j(\bar x; d) +\frac{1}{t_k}p^k_j(t_k)&\leqq 0,\label{equ:11}
\\
r_k\langle \nabla g_i(\eta_{i}^k), w_k\rangle+ \langle \nabla g_i(\bar x), d\rangle+\frac{1}{2}t_k  g^{\prime\prime}_i(\bar x; d) +\frac{1}{t_k}q^k_i(t_k)&\leqq 0,\label{equ:12}
\end{align}
for all $j\in J$, $i\in I(\bar x)$, and $k\in\mathbb{N}$.

We claim that $w\in \mathcal{C}(\bar x; d)\cap d^{\bot} \setminus\{0\}$. Indeed, since $d^k=d+r_kw^k\to d$, $w^k\to w$ as $k\to\infty$ and $d^k=d+r_kw^k\in \mathbb{S}^n$ for all $k\in \mathbb{N}$, we have $w\in T(\mathbb{S}^n; d)$. Since $T(\mathbb{S}^n; d)=d^{\bot}$, we have that  $w\in d^{\bot}\setminus\{0\}$. From \eqref{equ:12}, for each $i\in I(\bar x, d)$, one has
\begin{equation}\label{equ:13}
\langle \nabla g_i(\eta_{i}^k), w_k\rangle+\frac{1}{2}\frac{t_k}{r_k} g^{\prime\prime}_i(\bar x; d) +\frac{t_k}{r_k}\frac{q^k_i(t_k)}{t^2_k}\leqq 0.
\end{equation}
Letting $k\to\infty$ in \eqref{equ:13}, we obtain $\langle\nabla g_i(\bar x), w\rangle\leqq 0$ for all $i\in I(\bar x, d)$. Consequently, $w\in \mathcal{C}(\bar x; d)\cap d^{\bot} \setminus\{0\}$. From \eqref{equ:11}, for each $j\in J(\bar x; d)$, one has
\begin{equation}\label{equ:14}
\langle \nabla f_j(\xi_{j}^k), w_k\rangle+\frac{1}{2}\frac{t_k}{r_k}f^{\prime\prime}_j(\bar x; d)+\frac{t_k}{r_k}\frac{p^k_j(t_k)}{t_k^2}\leqq 0.
\end{equation}
Letting $k\to\infty$ in \eqref{equ:14}, we have $\langle \nabla f_j(\bar x), w\rangle\leqq 0$ for all $j\in J(\bar x; d)$. Therefore,
\begin{equation*}
\max_{j\in J(\bar x; d)}  \langle \nabla f_j(\bar x), w\rangle\leqq 0,
\end{equation*}
contrary to \eqref{add-condition}. The proof is complete. $\hfill\Box$
\end{proof}
\begin{remark}{\rm   In \cite{Jimenez03},  Jim\'enez and Novo  obtained some second-order sufficient conditions in terms of the second-order Hadamard directional derivative  for  strict local  efficient solutions of order $2$ of constrained vector optimization problems. Recall that a function $\phi\in C^1(X)$ is called {\em second-order Hadamard directional differentiable} at $\bar x\in X$ in the direction $d\in\mathbb{R}^n$ if there exists
\begin{equation*}
d^2\phi(\bar x; d):=\mathop{\lim\limits_{t\downarrow 0}}
	\limits_{u\to d}\frac{2}{t^2} \left[\phi(\bar x+tu)-\phi(\bar x)-t\langle\nabla\phi(\bar x), u\rangle\right].
\end{equation*}
The function $\phi$ is called second-order Hadamard directional differentiable  at $\bar x$   if $d^2\phi(\bar x; d)$ exists for all $d\in \mathbb{R}^n$. Clearly, if $d^2\phi(\bar x; d)$ exists, then so does $\phi^{\prime\prime}(\bar x; d)$ and they are the same. On the other hand, if $\phi^{\prime\prime}(\bar x; d)$ exists and $\nabla\phi(\cdot)$ is stable at $\bar x$, i.e., there are $L\geqq 0$ and $\delta>0$ such that 
$$\|\nabla\phi(x)-\nabla\phi(\bar x)\|\leqq L\|x-\bar x\|, \ \ \forall x\in B(\bar x, \delta),$$
then $d^2\phi(\bar x; d)$ also exists and  $d^2\phi(\bar x; d)=\phi^{\prime\prime}(\bar x; d)$; see \cite[Proposition 2.4]{Jimenez03}. This fact does not hold if $\nabla\phi(\cdot)$ is not stable at $\bar x$; see Example \ref{example1} below.  Jim\'enez and Novo \cite[Theorem 5.9]{Jimenez03} showed that if $f_j$, $j\in J$, $g_i$, $i\in I$, are second-order Hadamard directional differentiable  at $\bar x$ and   for each  $d\in [ T(\mathcal{F}; \bar x)\cap\mathcal{C}(f;\bar x)]\setminus\{0\}$,  there is $(\mu, \lambda)\in (\mathbb{R}^p_+\times \mathbb{R}^m_+)\setminus\{(0,0)\}$  satisfying conditions \eqref{Adjoint equation}--\eqref{complementary-condition}, then $\bar x$ is a strict local  efficient solution of order $2$ of problem \eqref{problem}.  Consequently, if  $f_j$, $j\in J$, $g_i$, $i\in I(\bar x)$, are of class $C^{1,1}(X)$, we can remove   condition \eqref{add-condition} from Theorem \ref{sufficient conditions-I}.  Recently, Ginchev and Ivanov \cite[Example 4]{Ginchev08} introduced a nice example to show that conditions \eqref{Adjoint equation}--\eqref{complementary-condition} are  not sufficient for a point $\bar x$ to be a strict local  efficient solution of order $2$ of scalar optimization problems with $C^1$ data only.   Therefore condition \eqref{add-condition} cannot be dropped in the formulation of  Theorem \ref{sufficient conditions-I}, if there is not any other additional condition.}
\end{remark}

\begin{example}\label{example1} {\rm  Let $f=(f_1, f_2)\colon \mathbb{R}^2\to\mathbb{R}^2$, $g\colon\mathbb{R}^2\to \mathbb{R}$, and $X$ be defined by
\begin{align*}
f_1(x)&:=
\begin{cases}
x_1^{\frac{7}{3}}\sin \frac{1}{x_1}+x_2& \text{if}\ \ x_1\neq 0,
\\
x_2 & \text{if}\ \ x_1= 0,
\end{cases}
\\
f_2(x)&:=x_1, g(x):=x_1^2-x_2 \ \ \forall x=(x_1, x_2)\in X,
\\
X&:=\mathbb{R}^2.
\end{align*}
Clearly, $f_1\in C^1(\mathbb{R}^2)$, $f_2, g\in C^2(\mathbb{R}^2)$ and  the feasible set of \eqref{problem} is
\begin{equation*}
\mathcal{F}=\{(x_1, x_2)\in\mathbb{R}^2\;:\; x_1^2-x_2\leqq 0\}.
\end{equation*}
By simple calculations, one has
\begin{align*}
\nabla f_1(x)&=
\begin{cases} \left(
\frac{7}{3}x_1^{\frac{4}{3}}\sin \frac{1}{x_1}-x_1^{\frac{1}{3}}\cos \frac{1}{x_1}, 1\right)^T& \text{if}\ \ x_1\neq 0,
\\
(0, 1)^T & \text{if}\ \ x_1= 0,
\end{cases}
\\
\nabla f_2(x)&=(1, 0)^T, \nabla g(x)=(2x_1, -1)^T \ \ \forall x=(x_1, x_2)\in\mathbb{R}^2.
\end{align*}

Since  $\nabla f_1(\bar x)=(0, 1)^T$, $\nabla f_2(\bar x)=(1, 0)^T$ and  $\nabla g(\bar x)=(0, -1)^T$, we have
\begin{align*}
\mathcal{C}(\bar x)=\{(d_1, d_2)\in\mathbb{R}^2\;:\; d_1\leqq 0, d_2=0\}.
\end{align*}		
For each $d=(d_1, d_2)\in \mathcal{C}(\bar x; d)\setminus\{0\}$, we have $d=(d_1, 0)\in \mathcal{C}(\bar x)$ with $d_1<0$, and
\begin{align*}
&J(\bar x; d)=\{1\}, I(\bar x; d)=I(\bar x)=I,
\\
&f_1^{\prime\prime}(\bar{x}; d)=f_2^{\prime\prime}(\bar{x}; d)=0, \ \ \text{and}\ \ g^{\prime\prime}(\bar{x}; d)=2d_1^2.
\end{align*}
Thus we can choose $(\mu_1, \mu_2, \lambda)=(1, 0, 1)$ satisfying all conditions \eqref{Adjoint equation}--\eqref{complementary-condition}. Besides, we see that
\begin{align*}
&\mathcal{C}(\bar x; d)\cap d^\bot=\{(w_1, w_2)\in\mathbb{R}^2\;:\; w_1=0, w_2\geqq 0\}.
\end{align*}
Hence, if $w=(w_1,w_2)\in\mathcal{C}(\bar x; d)\cap d^\bot\setminus\{0\}$, then $w_1=0,\,\,w_2>0$, and this implies
\[\max_{j\in J(\bar x; d)}  \langle \nabla f_j(\bar x), w\rangle=w_2>0,\]
 which says that  condition \eqref{add-condition} is satisfied for all $d\in [ T(\mathcal{F}; \bar x)\cap\mathcal{C}(f;\bar x)]\setminus\{0\}$. By  Theorem \ref{sufficient conditions-I}, $\bar x$ is a strict local  efficient solution of order $2$ of problem \eqref{problem}.

In fact, we can check that  $\nabla f_1(\cdot)$ is not stable at $\bar x$ and $d^2f_1(\bar x; d)$ does not exist for all $d\in \mathcal{C}(\bar x)\setminus\{0\}$. Thus \cite[Theorem 5.9]{Jimenez03} cannot be applied for this example.
}
\end{example}

\section{Sufficient conditions for global efficiency}
\label{Sufficient-global}
In this section, under suitable convex assumptions, we introduce some second-order sufficient conditions of Karush--Kuhn--Tucker type and Fritz-John one for global efficiency of \eqref{problem}. In order to formulate these results, we first recall some concepts of generalized convexity from \cite{Ginchev07,Ginchev08,Hanson81,Mangasarian69}.
\begin{definition}[{see \cite{Mangasarian69}}]{\rm   Let  $\phi\colon X\to \mathbb{R}$ be a real-valued function   and $\bar x\in X$. The function $\phi$ is said to be {\em quasiconvex} at $\bar x$ (with respect to $X$) if the conditions $y\in X$, $\phi (y)\leqq \phi (\bar x)$, $t\in [0, 1]$, $(1-t)\bar x+ty\in X$ imply $\phi (\bar x+  t(y-\bar x))\leqq \phi (\bar x)$. If $\phi$ is quasiconvex at every $x\in X$, then we say that $\phi$ is quasiconvex on $X$.
		}	
\end{definition}

The following result is well-known and it could be found in \cite[Theorem 9.1.4]{Mangasarian69}.
\begin{lemma}\label{lemma-quasiconvex} Let $\phi\colon X\to \mathbb{R}$ be a function defined on   $X$ which is both differentiable and quasiconvex at  $\bar x$. Then the following implication holds:
	\begin{equation}\label{quasiconvex-character}
	\left(y\in X, \phi (y)\leqq \phi (\bar x)\right) \Longrightarrow	\langle\nabla\phi (\bar x), y-\bar x\rangle\leqq 0.
	\end{equation}
\end{lemma}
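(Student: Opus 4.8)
The plan is to reduce the implication to a one-sided directional derivative computation along the segment joining $\bar x$ to $y$. First I would set $d := y - \bar x$ and observe that for $t \in [0,1]$ the point $\bar x + td$ coincides with the convex combination $(1-t)\bar x + ty$ appearing in the definition of quasiconvexity at $\bar x$. Since $X$ is open and $\bar x \in X$, there is some $\delta > 0$ with $B(\bar x, \delta) \subset X$, so $\bar x + td \in X$ for all sufficiently small $t > 0$; this guarantees that the feasibility requirement $(1-t)\bar x + ty \in X$ in the definition is satisfied on a right neighborhood of $t = 0$.

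Next, invoking the hypothesis $\phi(y) \leqq \phi(\bar x)$ together with quasiconvexity at $\bar x$, I would conclude that $\phi(\bar x + td) \leqq \phi(\bar x)$ for all such small $t > 0$. Dividing by $t > 0$ then yields the difference-quotient inequality
\[
\frac{\phi(\bar x + td) - \phi(\bar x)}{t} \leqq 0,
\]
valid for all sufficiently small $t > 0$.

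Finally, since $\phi$ is differentiable at $\bar x$, the limit of this difference quotient as $t \downarrow 0$ exists and equals $\langle \nabla \phi(\bar x), d \rangle$. Passing to the limit preserves the non-strict inequality, giving $\langle \nabla \phi(\bar x), y - \bar x \rangle \leqq 0$, which is exactly the desired conclusion.

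As for the main obstacle, there is essentially no deep difficulty here, the argument being a standard first-order consequence of quasiconvexity. The only point requiring genuine care is the bookkeeping near the left endpoint of the parameter interval, namely ensuring that $\bar x + td$ remains in $X$ so that the defining inequality of quasiconvexity is actually applicable; this is precisely where the openness of $X$ enters, and it is worth stating explicitly rather than leaving implicit.
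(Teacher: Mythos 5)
Your proof is correct. The paper itself offers no proof of this lemma---it simply cites \cite[Theorem 9.1.4]{Mangasarian69}---and your argument (quasiconvexity along the segment from $\bar x$ to $y$ gives $\phi(\bar x + td)\leqq \phi(\bar x)$ for small $t>0$, divide by $t$, and pass to the limit using differentiability at $\bar x$) is precisely the standard one found in that reference, with the openness of $X$ correctly invoked to ensure the defining condition $(1-t)\bar x + ty \in X$ of quasiconvexity is applicable near $t=0$.
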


\begin{definition}[{see \cite{Tuy64}}]{\rm  Suppose that the function $\phi\colon X \to \mathbb{R}$ is  differentiable at $\bar x\in X$. We say that $\phi$ is {\em pseudoconvex} at $\bar x$ if $y\in X$ and $\phi(y)<\phi(\bar x)$ imply $\langle \nabla\phi (\bar x), y-\bar x\rangle<0$.
		
		}	
\end{definition}
 \begin{definition}[{see \cite{Ginchev06}}]{\rm  Let $\phi\colon X\to \mathbb{R}$ be a differentiable function at $\bar x\in X$.    Suppose that $\phi$ is second-order directionally differentiable at $\bar x$ in every direction $y-\bar x$ such that $y\in X$, $\phi(y)<\phi(\bar x)$, $\langle\nabla\phi (\bar x), y-\bar x\rangle=0$. We say that $\phi$ is {\em second-order pseudoconvex} (for short, {\em $2$-pseudoconvex}) at  $\bar x$ if, for all $y\in X$, the following implications hold:
\begin{align*}
&\phi (y)<\phi (\bar x)\ \ \text{implies}\ \ \langle\nabla\phi (\bar x), y-\bar x\rangle\leqq 0;
\\
&\phi (y)<\phi (\bar x)\ \ \text{and}\ \ \langle\nabla\phi (\bar x), y-\bar x\rangle=0 \ \ \text{imply}\ \ \phi^{\prime\prime}(\bar x, y-\bar x)<0.
\end{align*}
}
\end{definition}
\begin{remark}{\rm
	Clearly, if $\phi$ is pseudoconvex at $\bar x$, then it  is also $2$-pseudoconvex at this point. The converse does not hold. For example, let $\phi\colon\mathbb{R}\to \mathbb{R}$ be a function defined by
	\begin{equation*}
	\phi(x):=
	\begin{cases}
	x^2 \ \ &\text{if}\ \ x\geq 0,
	\\
	-x^2\ \ &\text{if} \ \ x<0,
	\end{cases}
	\end{equation*}
	and $\bar x=0$. We see that $\phi (y)<\phi(\bar x)$ if and only if $y<0$. Since $\nabla \phi(\bar x)=0$, $\phi$ is not pseudoconvex at $\bar x$. For each $y<0$, one has
	$$\phi^{\prime\prime}(\bar x; y-\bar x)=\lim\limits_{t\downarrow 0}2\frac{\phi(ty)}{t^2}=-2y^2<0.$$
	This implies that $\phi$ is $2$-pseudoconvex at $\bar x$.
}	
\end{remark}

The following result gives sufficient conditions of Karush--Kuhn--Tucker type for a global weak  efficient solution of \eqref{problem} and generalizes \cite[Theorem 1]{Ginchev08} to the vector optimization case.
\begin{theorem}\label{sufficient conditions-II} Let $\bar x$ be a feasible point of \eqref{problem}. Suppose that $f_j$, $j\in J$, $g_i$, $i\in I(\bar x)$ are  second-order directionally differentiable at $\bar x$ in every critical direction $d\in \mathcal{C}(\bar x)$, $f_j$, $j\in J$, are $2$-pseudoconvex at $\bar x$, $g_i$, $i\in I(\bar x)$ are quasiconvex at $\bar x$. If for each $d\in \mathcal{C}(\bar x)$, there exist $\mu\in\mathbb{R}_+^p\setminus\{0\}$ and $\lambda\in\mathbb{R}_+^m$ such that
\begin{align}
&\sum_{j=1}^p \mu_j \nabla f_j(\bar x)+\sum_{i=1}^m \lambda_i\nabla g_i(\bar x)=0, \label{first-order-condition}
\\
&\sum_{j=1}^p \mu_j f_j^{\prime\prime}(\bar x; d)+\sum_{i=1}^m \lambda_i g_i^{\prime\prime}(\bar x; d)\geqq 0, \label{secon-order-condition}
\\
&\lambda_i g_i(\bar{x})=0, \ \ \ i\in I,\label{complementary-condition-II}
\end{align}	
then $\bar x$ is a global weak efficient solution of \eqref{problem}.
\end{theorem}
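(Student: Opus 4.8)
The plan is to argue by contradiction. Suppose $\bar x$ is not a global weak efficient solution of \eqref{problem}. Then there exists $y\in\mathcal{F}$ with $f(y)<f(\bar x)$, i.e.\ $f_j(y)<f_j(\bar x)$ for every $j\in J$; put $d:=y-\bar x$, which is nonzero since $y\neq\bar x$. First I would show that $d$ is a critical direction. For each $j\in J$, the first implication in the definition of $2$-pseudoconvexity applied to $f_j$ (using $f_j(y)<f_j(\bar x)$) yields $\langle\nabla f_j(\bar x),d\rangle\leqq 0$. For each $i\in I(\bar x)$ we have $g_i(y)\leqq 0=g_i(\bar x)$, so quasiconvexity of $g_i$ together with Lemma \ref{lemma-quasiconvex} gives $\langle\nabla g_i(\bar x),d\rangle\leqq 0$. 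Hence $d\in\mathcal{C}(\bar x)$, and the hypothesis supplies multipliers $\mu\in\mathbb{R}^p_+\setminus\{0\}$ and $\lambda\in\mathbb{R}^m_+$ satisfying \eqref{first-order-condition}--\eqref{complementary-condition-II} for this $d$.

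Next I would extract sign information from the first-order data. Taking the inner product of \eqref{first-order-condition} with $d$ and using \eqref{complementary-condition-II} (which forces $\lambda_i=0$ whenever $g_i(\bar x)<0$) gives
\[
\sum_{j=1}^p \mu_j\langle\nabla f_j(\bar x),d\rangle+\sum_{i\in I(\bar x)}\lambda_i\langle\nabla g_i(\bar x),d\rangle=0.
\]
Every summand is nonpositive by the previous paragraph, so each must vanish. In particular, $\mu_j>0$ forces $\langle\nabla f_j(\bar x),d\rangle=0$ (so $j\in J(\bar x;d)$), and $\lambda_i>0$ forces $\langle\nabla g_i(\bar x),d\rangle=0$ (so $i\in I(\bar x;d)$).

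Finally I would pass to the second order. For each $j$ with $\mu_j>0$ we now have both $f_j(y)<f_j(\bar x)$ and $\langle\nabla f_j(\bar x),d\rangle=0$, so the second implication of $2$-pseudoconvexity gives $f_j^{\prime\prime}(\bar x;d)<0$; since $\mu\neq 0$, this makes $\sum_{j=1}^p\mu_j f_j^{\prime\prime}(\bar x;d)<0$. For each $i\in I(\bar x)$ with $\lambda_i>0$ we have $\langle\nabla g_i(\bar x),d\rangle=0$, so Lemma \ref{Taylor-expansion} reduces the expansion of $g_i$ along $d$ to $g_i(\bar x+td)-g_i(\bar x)=\tfrac12 t^2 g_i^{\prime\prime}(\bar x;d)+o(t^2)$; quasiconvexity of $g_i$ (with $g_i(y)\leqq g_i(\bar x)$) gives $g_i(\bar x+td)\leqq g_i(\bar x)$ for small $t>0$, whence $g_i^{\prime\prime}(\bar x;d)\leqq 0$ after dividing by $\tfrac12 t^2$ and letting $t\downarrow 0$. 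Therefore $\sum_{i=1}^m\lambda_i g_i^{\prime\prime}(\bar x;d)\leqq 0$, and adding the two sums yields a strictly negative total, contradicting \eqref{secon-order-condition}. I expect the main obstacle to be this last step: converting the qualitative quasiconvexity of $g_i$ into the quantitative bound $g_i^{\prime\prime}(\bar x;d)\leqq 0$, which hinges on the gradient term $\langle\nabla g_i(\bar x),d\rangle$ having already been shown to vanish (so that the second-order term is the leading one) and on the openness of $X$ guaranteeing $\bar x+td\in X$ for small $t$.
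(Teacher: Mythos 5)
Your proof is correct and follows essentially the same route as the paper: contradiction via a point $y$ with $f(y)<f(\bar x)$, showing $d=y-\bar x\in\mathcal{C}(\bar x)$, pairing \eqref{first-order-condition} with $d$ to force each gradient term with positive multiplier to vanish, then invoking the second implication of $2$-pseudoconvexity for $f_j^{\prime\prime}(\bar x;d)<0$ and quasiconvexity for $g_i^{\prime\prime}(\bar x;d)\leqq 0$. The only cosmetic difference is that you derive $g_i^{\prime\prime}(\bar x;d)\leqq 0$ via Lemma \ref{Taylor-expansion} while the paper computes it directly from the definition of the second-order directional derivative (using $g_i(\bar x)=0$ and the vanishing gradient term); the two computations are identical in substance, and your explicit remark about the openness of $X$ is a point the paper leaves implicit.
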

\begin{proof} Assume the contrary that there exists $x\in \mathcal{F}$ satisfying $f(x)<f(\bar x)$, i.e., $f_j(x)<f_j(\bar x)$ for all $j\in J$. We claim that $x-\bar x$ is a critical direction at $\bar x$. By the $2$-pseudoconvexity of $f_j$, we have $\langle\nabla f_j(\bar x), x-\bar x\rangle\leqq 0$ for all $j\in J$. From the quasiconvexity of $g_i$ and $g_i(x)\leqq g_i(\bar x)$, $i\in I(\bar x)$, we have $\langle \nabla g_i(\bar x), x-\bar x\rangle\leqq 0$ for all $i\in I(\bar x)$. Thus, $x-\bar x$ is a critical direction at $\bar x$. By the assumptions of the theorem, there exist $\mu\in\mathbb{R}_+^p\setminus\{0\}$ and  $\lambda\in\mathbb{R}_+^m$ satisfying conditions \eqref{first-order-condition}--\eqref{complementary-condition-II}. Clearly, $\lambda_i=0$ when $i\notin I(\bar x)$. Since $x-\bar x\in \mathcal{C}(\bar x)$  and \eqref{first-order-condition}, we have
\begin{align*}
0&=\left\langle \sum_{j=1}^p \mu_j \nabla f_j(\bar x)+\sum_{i=1}^m \lambda_i\nabla g_i(\bar x), x-\bar x\right\rangle
\\
&= \sum_{j=1}^p \mu_j \langle\nabla f_j(\bar x), x-\bar x\rangle+\sum_{i\in I(\bar x)} \lambda_i\langle\nabla g_i(\bar x), x-\bar x\rangle\leqq 0.
\end{align*}
This implies that
\begin{equation}\label{system-critical}
\begin{cases}
&\mu_j \langle\nabla f_j(\bar x), x-\bar x\rangle=0,\ \ j\in J,
\\
&\lambda_i\langle\nabla g_i(\bar x), x-\bar x\rangle=0,\ \ i\in I(\bar x).
\end{cases}
\end{equation}
Denote
\begin{equation}\label{supp}
    \text{supp}\,\mu:=\{j\in J\,\,:\,\, \mu_j>0\}\ \ \text{and} \ \ \text{supp}\,\lambda:=\{i\in I\,\,:\,\, \lambda_i>0\}.
\end{equation}
Clearly, $\text{supp}\,\mu\neq \emptyset$ and $\text{supp}\,\lambda\subset I(\bar x)$.  Since \eqref{system-critical}, we have
\begin{equation*}
\begin{cases}
&\langle\nabla f_j(\bar x), x-\bar x\rangle=0,\ \ j\in \text{supp}\,\mu,
\\
&\langle\nabla g_i(\bar x), x-\bar x\rangle=0,\ \ i\in \text{supp}\,\lambda.
\end{cases}
\end{equation*}
By the $2$-pseudoconvexity of $f_{j}$,  one has $f_{j}^{\prime\prime}(\bar x, x-\bar x)<0$ for all $j\in \text{supp}\,\mu$. Moreover, by the quasiconvexity of $g_i$, we have $g_i(\bar x+t(x-\bar x))\leqq 0$ for all $t\in [0, 1]$ and $i\in I(\bar x)$. It follows that
\begin{align*}
g_i^{\prime\prime}(\bar x;  x-\bar x)&=\lim\limits_{t\downarrow 0}2\dfrac{g_i(\bar x+t(x-\bar x))-g_i(\bar x)-t\langle\nabla g_i(\bar x), x-\bar x\rangle}{t^2}
\\
&=\lim\limits_{t\downarrow 0}2\dfrac{g_i(\bar x+t(x-\bar x))}{t^2}\leqq 0
\end{align*}
for all $i\in \text{supp}\,\lambda$.  Therefore,
\begin{align*}
\sum_{j=1}^p \mu_j f_j^{\prime\prime}(\bar x; x-\bar x)+\sum_{i=1}^m \lambda_i g_i^{\prime\prime}(\bar x;  x-\bar x)&=\sum_{j\in \text{supp}\,\mu} \mu_j f_j^{\prime\prime}(\bar x; x-\bar x)
\\
&+\sum_{i\in \text{supp}\,\lambda} \lambda_i g_i^{\prime\prime}(\bar x;  x-\bar x)
\\
&\leqq \sum_{j\in \text{supp}\,\mu} \mu_j f_j^{\prime\prime}(\bar x; x-\bar x)< 0,
\end{align*}
contrary to \eqref{secon-order-condition}. $\hfill\Box$
\end{proof}
The following example  illustrates  Theorem  \ref{sufficient conditions-II}.
\begin{example}\label{linear-example}
{\rm Consider the following linear vector optimization problem:
	\begin{align}
	& \text{min}_{\,\mathbb{R}^2_+}\, f(x):=(f_1(x), f_2(x)) \tag{LVP}\label{linear-problem-exam}
	\\
	&\text{s.t.}\ \ x\in \mathcal{F}:=\{x\in\mathbb{R}^2\,:\, g(x)\leqq 0\},\notag
	\end{align}
	where $f_1(x_1, x_2):=x_1, f_2(x_1,x_2):=x_2,$ and $g(x_1, x_2):=-x_2.$	Let $\bar x=(0,0)\in \mathcal{F}$. The constraint function $g$ is linear, therefore quasiconvex. An easy computation shows that $f_1$ and $f_2$ are  $2$-pseudoconvex at $\bar x$. Since $\nabla f_1(\bar x)=(1, 0)^T$, $\nabla f_2(\bar x)=(0, 1)^T$ and $\nabla g(\bar x)=(0, -1)^T$, we have
	$$\mathcal{C}(\bar x)=\{(d_1, d_2)\,:\, d_1\leqq 0, d_2=0\}.$$
For each $d\in \mathcal{C}(\bar x)$, we can choose $\mu_1=0, \mu_2=1$ and $\lambda=1$   satisfying conditions \eqref{first-order-condition}--\eqref{complementary-condition-II}.  By  Theorem \ref{sufficient conditions-II}, $\bar x$ is a global weak  efficient solution of \eqref{linear-problem-exam}.
	}
\end{example}

By introducing the concept of strictly $2$-pseudoconvex function, Ginchev and Ivanov \cite[Theorems 3 and 4]{Ginchev08} presented some sufficient optimality conditions for  strict global solutions of scalar optimization problems.  We recall here the definition of strictly $2$-pseudoconvex functions.
\begin{definition}{\rm Suppose that  $\phi\colon X \to \mathbb{R}$ is a differentiable function at $\bar x\in X$ and second-order directionally differentiable at this point in every direction $y-\bar x$ such that $y\in X$, $\phi(y)\leqq\phi(\bar x)$, $\langle\nabla\phi (\bar x), y-\bar x\rangle=0$. We say that $\phi$ is  {\em strictly $2$-pseudoconvex} at  $\bar x$ if, for all $y\in X$, $y\neq \bar x$, the following implications hold:
		\begin{align*}
		&\phi (y)\leqq\phi (\bar x)\ \ \text{implies}\ \ \langle\nabla\phi (\bar x), y-\bar x\rangle\leqq 0;
		\\
		&\phi (y)\leqq \phi (\bar x)\ \ \text{and}\ \ \langle\nabla\phi (\bar x), y-\bar x\rangle=0 \ \ \text{imply}\ \ \phi^{\prime\prime}(\bar x, y-\bar x)<0.
		\end{align*}		
		}	
\end{definition}

It follows from this definition that every strictly $2$-pseudoconvex function is $2$-pseudoconvex. The converse does not hold. For example, the function $f_1$   in Example \ref{linear-example} is $2$-pseudoconvex at $\bar x=(0,0)$ but not strictly $2$-pseudoconvex. Indeed, for $y=(0, 1)$, we have $f_1(y)=f_1(\bar x)$, $\langle\nabla f_1(\bar x), y\rangle=0$, and $f_1^{\prime\prime}(\bar x; y-\bar x)=0$. Thus, $f_1$ is not strictly $2$-pseudoconvex at $\bar x$. We also see that $\bar x$ is not a strict global efficient solution of \eqref{linear-problem-exam}. Therefore the sufficient conditions of  Theorem  \ref{sufficient conditions-II} do not guarantee for a strict global  efficient solution even for linear vector optimization problems.  A natural question arises: {\em How does one obtain sufficient optimality conditions for strict global  efficient solutions of \eqref{problem}?} The rest of this section is aimed at solving the problem.

The following result gives sufficient conditions of Karush--Kuhn--Tucker type  for a strict global  efficient solution of \eqref{problem} under the assumptions that the objective functions are strictly $2$-pseudoconvex and the constraint functions are quasiconvex.

\begin{theorem}\label{sufficient conditions-III} Let $\bar x$ be a feasible point of \eqref{problem}. Suppose that $f_j$, $j\in J$, $g_i$, $i\in I(\bar x)$ are  second-order directionally differentiable at $\bar x$ in every critical direction $d\in \mathcal{C}(\bar x)$, $f_j$, $j\in J$, are strictly $2$-pseudoconvex at $\bar x$, $g_i$, $i\in I(\bar x)$ are quasiconvex at $\bar x$. If for each  $d\in\mathcal{C}(\bar x)$, there exist $\mu\in\mathbb{R}_+^p\setminus\{0\}$ and $\lambda\in\mathbb{R}_+^m$  satisfying conditions \eqref{first-order-condition}--\eqref{complementary-condition-II},	then $\bar x$ is a strict global  efficient solution of \eqref{problem}.
\end{theorem}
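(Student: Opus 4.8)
The plan is to argue by contradiction, closely following the scheme of the proof of Theorem \ref{sufficient conditions-II} but exploiting the \emph{strict} $2$-pseudoconvexity of the objective functions to accommodate the weaker hypothesis $f(x)\leqq f(\bar x)$ (instead of $f(x)<f(\bar x)$) built into the definition of a strict global efficient solution. Suppose $\bar x$ is not a strict global efficient solution of \eqref{problem}; then there exists $x\in\mathcal{F}$ with $x\neq\bar x$ and $f_j(x)\leqq f_j(\bar x)$ for all $j\in J$. The first step is to show that $x-\bar x\in\mathcal{C}(\bar x)$. Since $x\neq\bar x$, the first implication in the definition of strict $2$-pseudoconvexity yields $\langle\nabla f_j(\bar x),x-\bar x\rangle\leqq 0$ for all $j\in J$, while the quasiconvexity of $g_i$ together with $g_i(x)\leqq 0=g_i(\bar x)$ for $i\in I(\bar x)$ gives $\langle\nabla g_i(\bar x),x-\bar x\rangle\leqq 0$ via Lemma \ref{lemma-quasiconvex}; hence $x-\bar x$ is a critical direction.

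Next I would invoke the hypothesis with the critical direction $d=x-\bar x$ to obtain multipliers $\mu\in\mathbb{R}^p_+\setminus\{0\}$ and $\lambda\in\mathbb{R}^m_+$ satisfying \eqref{first-order-condition}--\eqref{complementary-condition-II}, where $\lambda_i=0$ for $i\notin I(\bar x)$ by complementarity. Pairing the adjoint equation \eqref{first-order-condition} with $x-\bar x$ and using $x-\bar x\in\mathcal{C}(\bar x)$, exactly as in the proof of Theorem \ref{sufficient conditions-II}, forces
\[
\langle\nabla f_j(\bar x),x-\bar x\rangle=0 \ \ (j\in\text{supp}\,\mu), \qquad \langle\nabla g_i(\bar x),x-\bar x\rangle=0 \ \ (i\in\text{supp}\,\lambda),
\]
with $\text{supp}\,\mu\neq\emptyset$ and $\text{supp}\,\lambda\subset I(\bar x)$, the notation being that of \eqref{supp}.

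The decisive step is the one where strict $2$-pseudoconvexity upgrades the conclusion. For each $j\in\text{supp}\,\mu$ we have $f_j(x)\leqq f_j(\bar x)$, $\langle\nabla f_j(\bar x),x-\bar x\rangle=0$, and $x\neq\bar x$, so the second implication in the definition of strict $2$-pseudoconvexity gives $f_j^{\prime\prime}(\bar x;x-\bar x)<0$. For the constraints, quasiconvexity forces $g_i(\bar x+t(x-\bar x))\leqq 0$ for $t\in[0,1]$, and the same computation as in Theorem \ref{sufficient conditions-II} yields $g_i^{\prime\prime}(\bar x;x-\bar x)\leqq 0$ for $i\in\text{supp}\,\lambda$. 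Summing against the (positive) multipliers then gives
\[
\sum_{j=1}^p\mu_j f_j^{\prime\prime}(\bar x;x-\bar x)+\sum_{i=1}^m\lambda_i g_i^{\prime\prime}(\bar x;x-\bar x)\leqq\sum_{j\in\text{supp}\,\mu}\mu_j f_j^{\prime\prime}(\bar x;x-\bar x)<0,
\]
which contradicts \eqref{secon-order-condition}. The main obstacle---indeed the only point that genuinely differs from Theorem \ref{sufficient conditions-II}---is justifying that strict $2$-pseudoconvexity really applies under the \emph{non-strict} inequality $f_j(x)\leqq f_j(\bar x)$. This is precisely why its definition is phrased with $\phi(y)\leqq\phi(\bar x)$ and the side condition $y\neq\bar x$: it guarantees that the strict second-order inequality $f_j^{\prime\prime}<0$ survives even in the critical situation where $f_j(x)=f_j(\bar x)$ for $j\in\text{supp}\,\mu$, which is exactly the case that ordinary $2$-pseudoconvexity (as in Theorem \ref{sufficient conditions-II}) fails to control.
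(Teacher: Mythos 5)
Your proof is correct and is essentially the argument the paper intends: the paper itself omits the proof of this theorem, remarking only that it is similar to that of Theorem \ref{sufficient conditions-II}, and your write-up is precisely that adaptation, with the non-strict inequality $f(x)\leqq f(\bar x)$ handled by the $y\neq\bar x$ clause and the second implication of strict $2$-pseudoconvexity. You also correctly isolate the one genuinely new point---why $f_j^{\prime\prime}(\bar x;x-\bar x)<0$ survives when $f_j(x)=f_j(\bar x)$ on $\mathrm{supp}\,\mu$---so nothing is missing.
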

\begin{proof} The proof is quiet similar to that of the proof of    Theorem \ref{sufficient conditions-II}, so omitted. $\hfill\Box$
\end{proof}

The next result gives sufficient conditions of Fritz-John type  for a strict global  efficient solution of \eqref{problem} with strictly $2$-pseudoconvex data and extends \cite[Theorem 4]{Ginchev08} to the vector case.
\begin{theorem} Let $\bar x$ be a feasible point of \eqref{problem}. Suppose that $f_j$, $j\in J$, $g_i$, $i\in I(\bar x)$ are  second-order directionally differentiable at $\bar x$ in every critical direction $d\in \mathcal{C}(\bar x)$, $f_j$, $j\in J$, $g_i$, $i\in I(\bar x)$, are strictly $2$-pseudoconvex at $\bar x$. If for each $d\in \mathcal{C}(\bar x)$, there exists  $(\mu, \lambda)\in (\mathbb{R}^p_+\times \mathbb{R}^m_+)\setminus\{(0,0)\}$ satisfying conditions \eqref{first-order-condition}--\eqref{complementary-condition-II},	then $\bar x$ is a strict global  efficient solution of \eqref{problem}.
\end{theorem}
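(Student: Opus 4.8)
The plan is to adapt the contradiction argument of Theorem \ref{sufficient conditions-II}, now exploiting the strict $2$-pseudoconvexity of \emph{both} the objective and the active constraint functions. Suppose, to the contrary, that $\bar x$ is not a strict global efficient solution; then there exists $x\in\mathcal{F}\setminus\{\bar x\}$ with $f(x)\leqq f(\bar x)$, that is, $f_j(x)\leqq f_j(\bar x)$ for every $j\in J$. First I would check that $x-\bar x$ is a critical direction. Applying the first implication in the definition of strict $2$-pseudoconvexity to each $f_j$ (legitimate since $x\neq\bar x$) gives $\langle\nabla f_j(\bar x),x-\bar x\rangle\leqq 0$ for all $j\in J$; and since $g_i(x)\leqq 0=g_i(\bar x)$ for $i\in I(\bar x)$, the same implication applied to the active $g_i$ yields $\langle\nabla g_i(\bar x),x-\bar x\rangle\leqq 0$. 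Hence $x-\bar x\in\mathcal{C}(\bar x)$.

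Next I would invoke the hypothesis at $d=x-\bar x$ to obtain a Fritz-John multiplier $(\mu,\lambda)\in(\mathbb{R}^p_+\times\mathbb{R}^m_+)\setminus\{(0,0)\}$ fulfilling \eqref{first-order-condition}--\eqref{complementary-condition-II}. By the complementarity condition \eqref{complementary-condition-II}, $\lambda_i=0$ whenever $i\notin I(\bar x)$. Pairing the adjoint equation \eqref{first-order-condition} with $x-\bar x$ and combining with the sign information just obtained, I get
\begin{equation*}
0=\sum_{j\in J}\mu_j\langle\nabla f_j(\bar x),x-\bar x\rangle+\sum_{i\in I(\bar x)}\lambda_i\langle\nabla g_i(\bar x),x-\bar x\rangle\leqq 0,
\end{equation*}
a sum of nonpositive terms equal to zero, which therefore vanishes term by term. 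With $\text{supp}\,\mu$ and $\text{supp}\,\lambda$ as in \eqref{supp}, this forces $\langle\nabla f_j(\bar x),x-\bar x\rangle=0$ for $j\in\text{supp}\,\mu$ and $\langle\nabla g_i(\bar x),x-\bar x\rangle=0$ for $i\in\text{supp}\,\lambda\subset I(\bar x)$.

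Finally I would apply the second implication in the definition of strict $2$-pseudoconvexity. For each $j\in\text{supp}\,\mu$ we have $f_j(x)\leqq f_j(\bar x)$, $x\neq\bar x$, and $\langle\nabla f_j(\bar x),x-\bar x\rangle=0$, so $f_j^{\prime\prime}(\bar x;x-\bar x)<0$; similarly, for each $i\in\text{supp}\,\lambda$ we have $g_i(x)\leqq 0=g_i(\bar x)$, $x\neq\bar x$, and $\langle\nabla g_i(\bar x),x-\bar x\rangle=0$, so $g_i^{\prime\prime}(\bar x;x-\bar x)<0$. Because $(\mu,\lambda)\neq(0,0)$, at least one of the two supports is nonempty, whence
\begin{equation*}
\sum_{j=1}^p\mu_j f_j^{\prime\prime}(\bar x;x-\bar x)+\sum_{i=1}^m\lambda_i g_i^{\prime\prime}(\bar x;x-\bar x)=\sum_{j\in\text{supp}\,\mu}\mu_j f_j^{\prime\prime}(\bar x;x-\bar x)+\sum_{i\in\text{supp}\,\lambda}\lambda_i g_i^{\prime\prime}(\bar x;x-\bar x)<0,
\end{equation*}
contradicting \eqref{secon-order-condition}. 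The step where this proof genuinely departs from Theorem \ref{sufficient conditions-II} — and the main obstacle to reusing that argument verbatim — is the Fritz-John feature that $\mu$ may be zero: when $\text{supp}\,\mu=\emptyset$ the strict inequality can no longer be supplied by the objective, so it must come from the constraints, and this is exactly why strict $2$-pseudoconvexity of the active $g_i$ (rather than the mere quasiconvexity used in Theorem \ref{sufficient conditions-II}) is indispensable here.
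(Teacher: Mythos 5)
Your proof is correct and takes essentially the same route as the paper's: show $x-\bar x\in\mathcal{C}(\bar x)$, pair the adjoint equation \eqref{first-order-condition} with $x-\bar x$ to force $\langle\nabla f_j(\bar x),x-\bar x\rangle=0$ on $\mathrm{supp}\,\mu$ and $\langle\nabla g_i(\bar x),x-\bar x\rangle=0$ on $\mathrm{supp}\,\lambda$, apply the second implication of strict $2$-pseudoconvexity on both supports, and use $(\mu,\lambda)\neq(0,0)$ to obtain a strict inequality contradicting \eqref{secon-order-condition}. If anything, you are more explicit than the paper, which merely cites ``an analysis similar to Theorem \ref{sufficient conditions-II}'' for the criticality step, whereas you correctly note that here the first implication of strict $2$-pseudoconvexity of the active $g_i$ must replace the quasiconvexity argument used there.
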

\begin{proof}  Arguing by contradiction, suppose that there exists $x\in \mathcal{F}$ such that $x\neq \bar x$ and $f(x)\leqq f(\bar x)$. An analysis similar to the one made in the proof of  Theorem \ref{sufficient conditions-II} shows that $x-\bar x\in \mathcal{C}(\bar x)$. Let $(\mu, \lambda)\in \mathbb{R}^p_+\times \mathbb{R}^m_+$ be a nonzero Lagrange multiplier  satisfying conditions \eqref{first-order-condition}--\eqref{complementary-condition-II}. Then we have
\begin{equation*}
\begin{cases}
&\langle\nabla f_j(\bar x), x-\bar x\rangle=0,\ \ j\in \text{supp}\,\mu,
\\
&\langle\nabla g_i(\bar x), x-\bar x\rangle=0,\ \ i\in \text{supp}\,\lambda,
\end{cases}
\end{equation*}
where $\text{supp}\,\mu$ and $\text{supp}\,\lambda$ are defined as in \eqref{supp}.
By the strictly $2$-pseudoconvexity of $f_j$, $j\in J$, $g_i$, $i\in I(\bar x)$, at $\bar x$, we have
\begin{equation*}
\begin{cases}
&f_j^{\prime\prime}(\bar x; x-\bar x)<0,\ \ j\in \text{supp}\,\mu,
\\
&g_i^{\prime\prime}(\bar x; x-\bar x)<0,\ \ i\in \text{supp}\,\lambda.
\end{cases}
\end{equation*}
Since $(\mu, \lambda)\neq 0$, it follows that
$$\text{supp}\,\mu\cup \text{supp}\,\lambda\neq \emptyset.$$
Thus,
\begin{align*}
\sum_{j=1}^p \mu_j f_j^{\prime\prime}(\bar x; x-\bar x)+\sum_{i=1}^m \lambda_i g_i^{\prime\prime}(\bar x;  x-\bar x)=&\sum_{j\in \text{supp}\,\mu} \mu_j f_j^{\prime\prime}(\bar x; x-\bar x)
\\
&+\sum_{i\in \text{supp}\,\lambda} \lambda_i g_i^{\prime\prime}(\bar x;  x-\bar x)<0,
\end{align*} 
contrary to \eqref{secon-order-condition}. $\hfill\Box$
\end{proof}

We now introduce sufficient conditions of Karush--Kuhn--Tucker type for a strict global  efficient solution of \eqref{problem} with quasiconvex data.
\begin{theorem}\label{sufficient conditions-V} Let $\bar x$ be a feasible point of \eqref{problem} and the functions $f_j$, $j\in J$, $g_i$, $i\in I(\bar x)$ be quasiconvex  at $\bar x$. Suppose that $f_j$, $j\in J$, $g_i$, $i\in I(\bar x)$ are second-order directionally differentiable at $\bar x$ in every critical direction $d\in \mathcal{C}(\bar x)$. If for each $d\in \mathcal{C}(\bar x)\setminus\{0\}$, there exist $\mu\in \mathbb{R}^p_+\setminus\{0\}$ and $\lambda\in  \mathbb{R}^m_+$  such that
	\begin{align}
	&\sum_{j=1}^p \mu_j \nabla f_j(\bar x)+\sum_{i=1}^m \lambda_i\nabla g_i(\bar x)=0, \label{first-order-condition-III}
	\\
	&\sum_{j=1}^p \mu_j f_j^{\prime\prime}(\bar x; d)+\sum_{i=1}^m \lambda_i g_i^{\prime\prime}(\bar x; d)> 0, \label{secon-order-condition-III}
	\\
	&\lambda_i g_i(\bar{x})=0,\ \ \ i\in I,\label{complementary-condition-III}
	\end{align}	
	then $\bar x$ is a strict global efficient solution of \eqref{problem}.
\end{theorem}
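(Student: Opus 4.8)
The plan is to argue by contradiction, mirroring the proof of Theorem \ref{sufficient conditions-II} but replacing the pseudoconvexity estimates with quasiconvexity ones. Suppose $\bar x$ is \emph{not} a strict global efficient solution, so there exists $x\in\mathcal{F}$ with $x\neq\bar x$ and $f(x)\leqq f(\bar x)$, i.e. $f_j(x)\leqq f_j(\bar x)$ for every $j\in J$. First I would verify that $x-\bar x$ is a nonzero critical direction. Since each $f_j$ is differentiable and quasiconvex at $\bar x$ with $f_j(x)\leqq f_j(\bar x)$, Lemma \ref{lemma-quasiconvex} gives $\langle\nabla f_j(\bar x),x-\bar x\rangle\leqq 0$ for all $j\in J$; similarly, for $i\in I(\bar x)$ one has $g_i(x)\leqq 0=g_i(\bar x)$, so quasiconvexity yields $\langle\nabla g_i(\bar x),x-\bar x\rangle\leqq 0$. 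Hence $x-\bar x\in\mathcal{C}(\bar x)\setminus\{0\}$, and the hypothesis supplies $\mu\in\mathbb{R}^p_+\setminus\{0\}$ and $\lambda\in\mathbb{R}^m_+$ satisfying \eqref{first-order-condition-III}--\eqref{complementary-condition-III} for this particular $d=x-\bar x$.

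Next I would extract that the relevant first-order terms vanish individually. By \eqref{complementary-condition-III}, $\lambda_i=0$ whenever $i\notin I(\bar x)$. Pairing \eqref{first-order-condition-III} with $x-\bar x$ gives
\[
\sum_{j=1}^p\mu_j\langle\nabla f_j(\bar x),x-\bar x\rangle+\sum_{i\in I(\bar x)}\lambda_i\langle\nabla g_i(\bar x),x-\bar x\rangle=0 ,
\]
a sum of nonpositive terms (nonnegative multipliers times nonpositive inner products) equal to zero, so each term is zero. In particular $\langle\nabla f_j(\bar x),x-\bar x\rangle=0$ for $j\in\text{supp}\,\mu$ and $\langle\nabla g_i(\bar x),x-\bar x\rangle=0$ for $i\in\text{supp}\,\lambda$, with $\text{supp}\,\mu,\text{supp}\,\lambda$ as in \eqref{supp} and $\text{supp}\,\lambda\subset I(\bar x)$.

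The decisive step is to bound the second-order sum above by $0$, and here quasiconvexity of the \emph{objectives} plays the role that (strict) $2$-pseudoconvexity played before. Because $f_j(x)\leqq f_j(\bar x)$, quasiconvexity at $\bar x$ gives $f_j(\bar x+t(x-\bar x))\leqq f_j(\bar x)$ for all small $t>0$ (the segment lies in the open set $X$ near $\bar x$). Thus for $j\in\text{supp}\,\mu$, using $\langle\nabla f_j(\bar x),x-\bar x\rangle=0$,
\[
f_j^{\prime\prime}(\bar x;x-\bar x)=\lim_{t\downarrow 0}\frac{2}{t^2}\big[f_j(\bar x+t(x-\bar x))-f_j(\bar x)\big]\leqq 0 .
\]
The identical computation for $g_i$ with $g_i(x)\leqq 0=g_i(\bar x)$ gives $g_i^{\prime\prime}(\bar x;x-\bar x)\leqq 0$ for $i\in\text{supp}\,\lambda$. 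Since all terms off the supports carry a zero multiplier, summing yields
\[
\sum_{j=1}^p\mu_j f_j^{\prime\prime}(\bar x;x-\bar x)+\sum_{i=1}^m\lambda_i g_i^{\prime\prime}(\bar x;x-\bar x)
=\sum_{j\in\text{supp}\,\mu}\mu_j f_j^{\prime\prime}(\bar x;x-\bar x)+\sum_{i\in\text{supp}\,\lambda}\lambda_i g_i^{\prime\prime}(\bar x;x-\bar x)\leqq 0 ,
\]
which contradicts the strict inequality \eqref{secon-order-condition-III}.

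I expect the only delicate points to be the term-by-term vanishing of the first-order relation (which rests on the sign structure: nonnegative multipliers against nonpositive inner products summing to zero) and the careful invocation of quasiconvexity along the segment only for small $t>0$, so that the one-sided second-order limit is genuinely $\leqq 0$. Everything else is bookkeeping entirely parallel to Theorem \ref{sufficient conditions-II}, and the strictness in \eqref{secon-order-condition-III} is exactly what upgrades the conclusion from weak efficiency to strict global efficiency here.
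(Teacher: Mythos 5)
Your proof is correct, but it takes a genuinely different route from the paper's own proof of Theorem \ref{sufficient conditions-V}. The paper never isolates the individual first-order terms: it applies Lemma \ref{Taylor-expansion} to the quasiconvexity inequalities $f_j(\bar x+td)\leqq f_j(\bar x)$ and $g_i(\bar x+td)\leqq 0$, multiplies the resulting expansions by $\mu_j$ and $\lambda_i$, cancels the \emph{aggregated} gradient term using \eqref{first-order-condition-III}, and only then lets $t\downarrow 0$ to conclude that the weighted second-order sum is $\leqq 0$. You instead transplant the support-set argument from the paper's proof of Theorem \ref{sufficient conditions-II}: pairing \eqref{first-order-condition-III} with $x-\bar x$ and exploiting the sign structure forces each term $\mu_j\langle\nabla f_j(\bar x),x-\bar x\rangle$ and $\lambda_i\langle\nabla g_i(\bar x),x-\bar x\rangle$ to vanish separately, after which you bound each second-order derivative on $\text{supp}\,\mu$ and $\text{supp}\,\lambda$ directly from the definition. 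Both arguments are sound. Yours yields slightly finer intermediate information ($f_j^{\prime\prime}(\bar x;x-\bar x)\leqq 0$ for every $j\in\text{supp}\,\mu$, and likewise for the active constraints, rather than only nonpositivity of the weighted sum) and makes transparent that Theorem \ref{sufficient conditions-V} differs from Theorem \ref{sufficient conditions-II} only in that strictness now comes from hypothesis \eqref{secon-order-condition-III} rather than from $2$-pseudoconvexity; the paper's aggregation, by contrast, dispenses with the term-by-term vanishing step entirely. One small point in your favor: invoking quasiconvexity only for small $t>0$ (so that $\bar x+t(x-\bar x)\in X$, as the pointwise definition of quasiconvexity requires when $X$ need not be convex) is more careful than the paper's assertion that the inequality holds for all $t\in[0,1]$.
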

\begin{proof}  The proof is indirect. Suppose that $\bar x$ is not a  strict global  efficient solution of \eqref{problem}. Then, there exists $x\in \mathcal{F}$ such that $x\neq \bar x$ and $f(x)\leqq f(\bar x).$ This implies that
\begin{equation*}
\begin{cases}
f_j(x)\leqq f_j(\bar x), \ \ \ \forall j\in J,
\\
g_i(x)\leqq g_i(\bar x), \ \ \ \forall i\in I(\bar x).
\end{cases}
\end{equation*}
By Lemma  \ref{lemma-quasiconvex} and the quasiconvexity of  $f_j$  and $g_i$ at $\bar x$, we have
 \begin{equation*}
 \begin{cases}
 \langle \nabla f_j(\bar x), x-\bar x\rangle\leqq 0, \ \ \ \forall j\in J,
 \\
 \langle \nabla g_i(\bar x), x-\bar x\rangle\leqq 0, \ \ \ \forall i\in I(\bar x).
 \end{cases}
 \end{equation*}
Put $d=x-\bar x$. Then, $d$ is a nonzero critical direction at $\bar x$. Using the assumptions of the theorem we deduce that there exist $\mu\in\mathbb{R}^p_+\setminus\{0\}$ and $\lambda\in  \mathbb{R}^m_+$ satisfying conditions \eqref{first-order-condition-III}--\eqref{complementary-condition-III}. For each $j\in J$, again by the quasiconvexity of $f_j$, we have
 $$f_j(\bar x+td)\leqq f_j(\bar x), \ \ \forall t\in [0, 1].$$
 By Lemma \ref{Taylor-expansion}, for all  $t>0$ small enough, one has
$$0\geq f_j(\bar x+td)-f_j(\bar x)=t\langle\nabla f_j(\bar x), d\rangle+\frac{1}{2}t^2f_j^{\prime\prime}(\bar x; d)+o(t^2), \ \ \forall j\in J.$$
Consequently,
\begin{equation}\label{equ:27}
\langle\nabla f_j(\bar x), d\rangle+\frac{1}{2}tf_j^{\prime\prime}(\bar x; d)+o(t)\leqq 0
\end{equation}
for all $t>0$ small enough and $j\in J$.

Similarly, for each $i\in I(\bar x)$ and $t>0$ small enough, we have
 \begin{equation}\label{equ:28}
 \langle\nabla g_i(\bar x), d\rangle+\frac{1}{2}tg_i^{\prime\prime}(\bar x; d)+o(t)\leqq 0.
 \end{equation}
 Now multiplying \eqref{equ:27} by $\mu_j$ and \eqref{equ:28} by $\lambda_i$ and then adding, we get
 \begin{align*}
 0\geqq \bigg\langle\sum_{j\in J} \mu_j\nabla f_j(\bar x)&+\sum_{i\in I(\bar x)}\lambda_i\nabla g_i(\bar x), d\bigg\rangle 
 \\
 &+\frac{1}{2}t\left(\sum_{j\in J} \mu_j f_j^{\prime\prime}(\bar x; d)+\sum_{i\in I(\bar x)}\lambda_i g_i^{\prime\prime}(\bar x; d)\right)+o(t).
 \end{align*}
 From this and \eqref{first-order-condition-III} it follows that
 \begin{equation}\label{equ:29}
 \sum_{j\in J} \mu_j f_j^{\prime\prime}(\bar x; d)+\sum_{i\in I(\bar x)}\lambda_i g_i^{\prime\prime}(\bar x; d)+o(1)\leqq 0
 \end{equation}
 for all $t>0$ small enough. Letting $t\downarrow 0$ in \eqref{equ:29}, we obtain
 \begin{equation*}
 \sum_{j\in J} \mu_j f_j^{\prime\prime}(\bar x; d)+\sum_{i\in I(\bar x)}\lambda_i g_i^{\prime\prime}(\bar x; d)\leqq 0,
 \end{equation*}
 contrary to \eqref{secon-order-condition-III}. $\hfill\Box$
\end{proof}

By replacing the quantity $(y-\bar x)$ in \eqref{quasiconvex-character} by a function $\eta(y, \bar x)$, Hanson \cite{Hanson81} introduced a new concept of quasiinvex functions as a generalization of quasiconvex functions as follows.
\begin{definition}[{see \cite{Hanson81}}]
	{\rm  Suppose that the function $\phi\colon X\to \mathbb{R}$ is differentiable at $\bar x\in X$. We say that $\phi$ is {\em quasiinvex} at $\bar x\in X$ with respect to $\eta(\,\cdot\,, \bar x)\colon X\to \mathbb{R}$ if the following condition holds:
		\begin{equation*}
		\left(y\in X, \phi (y)\leqq \phi (\bar x)\right) \Longrightarrow	\langle\nabla\phi (\bar x), \eta(y,\bar x)\rangle\leqq 0.
		\end{equation*}
	}
\end{definition}

\begin{remark}{\rm We have the following observations:
\begin{itemize}
  \item We note here that  the concepts of  quasiinvex functions  and  quasiconvex functions can be very different. For example, let $\phi (x)=x^3$ for all $x\in\mathbb{R}$ and $\bar x=0$. Since $\nabla\phi(\bar x)=0$, $\phi$ is quasiinvex at $\bar x$ with respect to any function $\eta(\,\cdot\,, \bar x)$. Moreover, it is easy to check that $\phi$ is quasiconvex at $\bar x$. Thus, if $\phi(y)\leqq \phi(\bar x)$, then
$$\phi (\bar x+  t(y-\bar x))\leqq \phi (\bar x), \ \ \ \forall t\in [0, 1].$$
This property does not hold for  quasiinvex functions. Indeed, let $\eta(y, \bar x)=-y-\bar x$ for all $y\in\mathbb{R}$. Then, $\phi$  is quasiinvex at $\bar x$ with respect to $\eta(\,\cdot\,, \bar x)$. However, for $y=-1$, we see that $\phi (y)<\phi (\bar x)$ and
$$\phi (\bar x+  t\eta(y, \bar x))=t^3>  \phi (\bar x),\ \ \ \forall t>0.$$	
  \item The following example indicates that if the quasiconvexity of the objective functions and the active constraint functions is replaced by the quasiinvexity of these functions, then Theorem \ref{sufficient conditions-V} may not be valid. This shows that Theorem 5 in \cite{Santos2013} is not correct.
 \end{itemize}
		}	
\end{remark}

\begin{example}\label{counter-example}{\rm  Consider the following problem:
\begin{align*}
&\text{min}\,_{\mathbb{R}_+}\, f(x)
\\
&\text{s. t.} \ \ x\in \mathcal{F}:=\{x\in \mathbb{R}\,:\, g(x)\leqq 0\},
\end{align*}	
where  $f, g\colon\to\mathbb{R}$ are two functions defined by
$$f(x):=-x^3, g(x):=-x^3+x^2, \ \ \forall x\in\mathbb{R}.$$
Obviouly $\bar x:=0\in \mathcal{F}$. Since $\nabla f(\bar x)=0$ and $\nabla g(\bar x)=0$, we have that $f$ and $g$ are quasiinvex at $\bar x$ with respect to any function $\eta(\,\cdot\,, \bar x)$. However, the function $g$ is not quasiconvex at $\bar x$. Indeed, for $x=1$, we have $g(x)=g(\bar x)$ and
$$g(\bar x+t(x-\bar x))=t^2(1-t)>g(\bar x),\ \ \ \forall t\in \left(0, 1\right)$$
as required.

Clearly, $\mathcal{C}(\bar x)=\mathbb{R}$. We can choose the same Lagrange multipliers  $\mu\in\mathbb{R}_+\setminus\{0\}$ and $\lambda\in  \mathbb{R}_+$ satisfying conditions \eqref{first-order-condition-III}--\eqref{complementary-condition-III} for all critical directions $d\in \mathcal{C}(\bar x)\setminus\{0\}$; for example, $(\mu, \lambda)=(1,1)$. However, since $x=1\in \mathcal{F}$ and $f(1)<f(\bar x)$,  $\bar x$ is not a global minimum solution of $f$ on $\mathcal{F}$. This shows that \cite[Theorem 5]{Santos2013} is not correct even for scalar optimization problems with $C^2$ data.
}
\end{example}


\section*{Acknowledgments} {J.-C. Yao and C.-F. Wen are supported by the Taiwan MOST [grant number 107-2923-E-039-001-MY3], [grant number 107-2115-M-037-001],  respectively, as well as the grant from Research Center for Nonlinear 	Analysis and Optimization, Kaohsiung Medical University, Taiwan. Y.-B. Xiao is supported by the National Natural Science Foundation of China (11771067).}

\end{document}